\newtheorem{lem}{Lemma}[section]
\newtheorem{thm}{Theorem}[section]
\newtheorem{defd}{Definition}[section]
\newtheorem{prop}{Proposition}[section]
\newtheorem{cor}{Corollary}[section]
\newtheorem{remark}{Remark}[section]
\newcommand{\real}{{\mathbb{R}}}
\newcommand{\eref}[1]{(\ref{#1})}
\newcommand{\tK}{\tilde K}
\newcommand{\tKi}{\tilde K^{(i)}}
\newcommand{\Ki}{ K^{(i)}}
\newcommand{\tKj}{\tilde K^{(j)}}
\newcommand{\Kj}{ K^{(j)}}
\newcommand{\tP}{\tilde P}
\newcommand{\tPi}{\tilde P^{(i)}}
\newcommand{\Pii}{ P^{(i)}}
\newcommand{\ta}{\tilde a}
\newcommand{\txi}{\tilde \xi}
\newcommand{\txii}{\tilde \xi^{(i)}}
\newcommand{\xii}{ \xi^{(i)}}
\newcommand{\tomega}{\tilde \omega}
\title{ Implicit--Explicit  Runge-Kutta schemes for numerical discretization of
optimal control problems \thanks{  This work has been supported by DFG
HE5386/7-1,  HE5386/8-1 and by DAAD  50727872, 50756459 and
54365630. We also acknowledge the support of Ateneo Italo-Tedesco
(AIT) under the Vigoni project 2010-2012 "Adjoint implicit-
-explicit methods for the numerical solution to optimization
problems". }
}
\author{M. Herty\thanks{RWTH Aachen University, Templergraben 55, D-52065 Aachen, GERMANY.
{\tt \{herty,steffensen\}@mathc.rwth-aachen.de} }  \and  L.
Pareschi\thanks{University of Ferrara, Department of
Mathematics, Via Machiavelli 35, I-44121 Ferrara, ITALY. {\tt
lorenzo.pareschi@unife.it}} \and S. Steffensen\footnotemark[1] }
\begin{document}

\maketitle

\begin{abstract}
Implicit-explicit (IMEX) Runge-Kutta methods play a major rule in
the numerical treatment of differential systems governed by stiff
and non-stiff terms. This paper discusses order conditions and
symplecticity properties of a class of IMEX Runge--Kutta methods
in the context of optimal control problems. The analysis of the
schemes is based on the continuous optimality system. Using
suitable transformations of the adjoint equation, order conditions
up to order three  are proven as well as the relation between
adjoint schemes obtained through different transformations is
investigated. Conditions for the IMEX Runge--Kutta methods to be
symplectic are also derived. A numerical example illustrating the
theoretical properties is presented.
\end{abstract}

\begin{keywords}
 IMEX schemes, optimal control,
symplectic methods, Runge-Kutta methods
\end{keywords}

\begin{AMS}
65Kxx, 49M25, 65L06
\end{AMS}

\pagestyle{myheadings}
\thispagestyle{plain}
\markboth{}{}

\section{Introduction}\label{intro}

Recently, there has been intense research on the time
discretization of optimal control problems involving differential
equations. Such methods have found widespread applications in
aerospace and mechanical engineering, the life sciences, and many
other disciplines. In particular, properties of Runge--Kutta
methods have been investigated for example in
\cite{Hager00,BonnansLaurent-Varin06,Kaya10,LangVerwer2011,HertySchleper11,DH01,DH02}.
Hager \cite{Hager00} investigated order conditions (up to order
four) for Runge--Kutta methods applied to optimality systems. This
work has been later extended \cite{BonnansLaurent-Varin06, Kaya10} and
also properties of symplecticity of the scheme have been studied,
see also \cite{CHV08}. Further studies of discretizations of state
and control constrained problems using Runge--Kutta methods have
been  conducted in \cite{DH01,DH02}. The observations lead to the
idea to extend also other schemes  like W-methods to optimal
control problems \cite{LangVerwer2011}. Further, automatic
differentiation has been applied to Runge--Kutta discretizations
\cite{Walther2007}.

In many practical application involving systems of differential
equations of the form
\[
y'(t)=f(y(t),t)+g(y(t),t),
\]
where $f$ and $g$ are eventually obtained as suitable
finite-difference or finite-element approximations of spatial
derivatives, the time scales induced by the two operators may be
considerably different. Let us assume that $f$ is the non-stiff
term and $g$ the stiff one. Although the problem is stiff as a
whole, the use of fully implicit solvers originates a nonlinear
system of equations involving also the non-stiff term $f$ which
quite often represent the most expensive/difficult term in the
computation. Thus it is highly desirable to have a combination of
implicit and explicit (IMEX) discretization terms to resolve stiff
and non--stiff dynamics accordingly. For Runge-Kutta methods such
schemes have been studied in
\cite{ARS97,Bos07,BPR11,DP11,Hig2006,KC03,PR,PR03}. Among the
prominent examples are the numerical integration of hyperbolic
conservation laws, convection--diffusion equations and singular
perturbed problems.

As discussed in \cite{KC03, PR} the construction of such methods
implies new difficulties due to the appearance of coupled order
conditions and to the possible loss of accuracy close to stiff
regimes. The present work is concerned with the use of
implict--explicit methods in the context of optimal control
problems. Here we focus our attention to the order condition of
the adjoint IMEX system and its symplecticity property leaving to
further research specific application to partial differential
equations. We refer to \cite{HertyBanda11, HertySchleper11} for
examples of applications to hyperbolic problems.

The general IMEX Runge-Kutta scheme is introduced in Section 2 as
well as its discrete adjoint equations.  A transformation of these
equations is proposed in order to later on analyse order
conditions and symplecticity properties. Since the presented
transformation is different from the one used for example in
\cite{Hager00,HertySchleper11} we also discuss the
relation between the schemes obtained by using the different
transformations. The existing relations are summarized in Figure
\ref{fig-summary}. We furthermore investigate the relation
 between the two possible approaches  to derive the optimality system: we prove in Theorem 2.1
that  {discretize--then--optimize} and
{optimize--then--discretize} are equivalent. The order conditions
up to order three are summarized in Theorem 3.1 and the results on
symplecticity are given in Theorem 3.2. A numerical example is
presented in Section 4. Examples of IMEX Runge-Kutta schemes up to
order three are reported in a separate appendix.

\section{IMEX Runge-Kutta methods for optimal control problems}\label{sec2}

\subsection{The optimal control problem}
We consider optimal control problems for ordinary differential equations
of  type (\ref{OCP}):

\begin{subequations}\label{OCP}
\begin{align}
 (OCP) \qquad&         \min \;  j(y(T)) \quad \mbox{ such that }Ê\\
                         & \dot y(t)= f(y(t),u(t))+ g(y(t),u(t)),\qquad t\in [0,T]\\
               &      y(0) =  y^0 .
  \end{align}
\end{subequations}

Related to the optimal control problem we introduce the
Hamiltonian function $H$ as $H(y,u,p):= p ^T( f(y,u)+ g(y,u)).$
Under  appropriate conditions it is well--known  \cite{Hestenes80, Troutman1996}
that the first--order optimality conditions for (\ref{OCP})
are
\begin{subequations}\label{osOCP}
\begin{eqnarray}
 \dot y&=& ~ ~ \,H_p(y,u,p)=  f(y,u) +  g(y,u), \qquad \qquad ~ ~ y(0)\,=\, y^0\label{osOCPa}\\
 \dot p&=& - H_y(y,u,p)= - f_y(y,u)^T p -   g_y(y,u)^T  p, \qquad p(T)\,=\, j'(y(T))\label{osOCPb}\\
0&=& ~~H_u(y,u,p)=f_u(y,u)^T p +  g_u(y,u)^T  p.\label{osOCPc}
\end{eqnarray}
\end{subequations}
The equation \eref{osOCPa} is called state equation and
\eref{osOCPb} is called adjoint equation. We are interested in
implicit--explicit  Runge--Kutta  (IMEX--RK) discretizations for
\eref{osOCPa} and \eref{osOCPb}.
To be more precise,  we treat $f$ by an explicit method and assume
that $g$ enjoys some stiffness so that an implicit method is
required. Therefore, the discretization of the  general case of
\eref{osOCPa}  leads to  two different schemes for $f$ and $g$,
respectively. A corresponding Runge-Kutta discretization scheme
\cite{PR03,PR} with $s$ stages is given by
\begin{subequations}\label{SRK-1}
 \begin{align}
Y^{(i)}_n&  =&y_n + h\sum_{j=1}^{s} \ta_{ij}f(Y_n^{(j)},u_n^j) + h\sum_{j=1}^s
a_{ij}g(Y_n^{(j)},u_n^j)\qquad i=1,..,s\\
y_{n+1}&=& y_n +
h\sum_{i=1}^s \tomega_if(Y_n^{(i)},u_n^i) + h\sum_{i=1}^s \omega_ig(Y_n^{(i)},u_n^i),\qquad n=0,1,2,.
 \end{align}
\end{subequations}
where $A,\tilde A,\omega,\tomega$ are the associated Runge--Kutta coefficient matrices
 and the Runge--Kutta weights, respectively.
We refer to methods where an explicit scheme for $f$ and an
implicit scheme for $g$ is used as as IMEX--RK schemes.  Their
properties have been investigated for example in
\cite{ARS97,PR03, PR}. A particularly interesting
subclass is the class of diagonally implicit IMEX--RK methods.
\begin{defd}
We call the method \eqref{SRK-1} diagonally implicit  IMEX--RK
method, iff $\ta_{ij}=0$ for $j\geq i$ and $a_{ij}=0$ for all
$j>i$.
\end{defd}

In order to simplify the notation in the sequel we do not
truncate the corresponding sums and, if not stated otherwise, all following results are given for a general implicit--explicit methods. Only later
we will reduce the
investigation to the class of diagonally implicit methods.
\par
As in \cite{BonnansLaurent-Varin06,PR,PR03} we use an equivalent formulation of the IMEX--RK
scheme in order to derive the discrete first--order optimality conditions. Instead of representation
 \eref{SRK-1} we use the equivalent formulation (\ref{SRK-2})
\begin{subequations}\label{SRK-2}
 \begin{eqnarray}
\tKi_n&  =& f\left(y_n+h\sum_{j=1}^s \ta_{ij}\tKj + h\sum_{j=1}^s a_{ij}\Kj_n,u_{n}^i\right)\\
\Ki_n&  =& g\left(y_n+h\sum_{j=1}^s \ta_{ij}\tKj + h\sum_{j=1}^s a_{ij}\Kj_n,u_{n}^i\right)\\
y_{n+1}&=& y_n +h\sum_{i=1}^s \tomega_i\tKi_n + h\sum_{i=1}^s \omega_i\Ki_n.
 \end{eqnarray}
\end{subequations}
Note that due to the two different schemes for $f$ and $g$, respectively,
 we introduce two auxiliary variables $\tKi$ and $\Ki$. These lead to additional discrete adjoint equations
compared with the formulation in \cite{BonnansLaurent-Varin06}.  The associated discretized optimal
control problem to (\ref{OCP}) using IMEX--RK is hence given by
\begin{subequations}\label{DOP}
\begin{align}
 (DOP) \quad            & \min  j(y_N)  \mbox{ such that }Ê\\
               			& \tKi =  f\left(y_n+h\sum_{j=1}^s \ta_{ij}\tKj + h\sum_{j=1}^s a_{ij}\Kj, u_n^i\right)\\
&\Ki = g\left(y_n+h\sum_{j=1}^s \ta_{ij}\tKj + h\sum_{j=1}^s a_{ij}\Kj,u_n^i\right)\\
&y_{n+1} =  y_n +h\sum_{i=1}^s \tomega_i\tKi + h\sum_{i=1}^s \omega_i\Ki, \quad
 y_0 = y^0.
  \end{align}
\end{subequations}
Clearly, the Lagrangian  is
\begin{eqnarray}\label{lagrangian}
 \mathcal L(y, K,\tK,p,\txi,\xi)&=& j(y_N)+p_0^T(y_0-y^0)\nonumber\\
&&\quad +\sum_{n=0}^{N-1}
\left[ p_{n+1} ^T\left(-y_{n+1}+y_n +h\sum_{i=1}^s \tomega_i\tKi_n +
 h\sum_{i=1}^s \omega_i\Ki_n\right)\right.\\[0.5em]
&&\left.\qquad  + \sum_{i=1}^s (\txii_n)^T(-\tKi_n+ f(Y^{(i)}_n,u_n^i))+
\sum_{i=1}^s (\xii_n)^T(-\Ki_n+ g(Y^{(i)}_n,u_n^i))\right]\nonumber
\end{eqnarray}
where $
 Y^{(i)}_n:=y_n+h\sum_{j=1}^s \ta_{ij}\tKj_n + h\sum_{j=1}^s a_{ij}\Kj_n .
$
 Here, the vectors $\txi,\xi$ and $p$ are the Lagrange
multipliers  corresponding
  to the equality constraints given by the initial condition and system \eref{SRK-2}, respectively.
   For the  first order necessary optimality conditions for \eref{DOP}
we obtain the feasibility conditions given by \eref{SRK-2} and furthermore the discrete adjoint
equations which are  derived  upon  differentiation of $ \mathcal L(y,p,\txi,\xi)$ with respect to $\Ki_n,\tKi_n$ and $y_n$,
respectively. The system of adjoint equations reads
\begin{subequations}\label{ARK-1}
 \begin{eqnarray}
\txii_n&  =&h\, \tomega_i \,p_{n+1}+h\sum_{j=1}^s \ta_{ji}\,f_y(Y^{(j)}_n,u_n^j)^T\txi^{(j)}_n
+ h\sum_{j=1}^s \ta_{ji}\,g_y(Y^{(j)}_n,u_n^j)^T \xi^{(j)}_n\\
\xii_n&  =&h\, \omega_i \,p_{n+1}+h\sum_{j=1}^s a_{ji}\,f_y(Y^{(j)}_n,u_n^j)^T \txi^{(j)}_n
+ h\sum_{j=1}^s a_{ji}\,g_y(Y^{(j)}_n,u_n^j)^T\xi^{(j)}_n\\
p_{n}&=& p_{n+1} +\sum_{i=1}^s  \,f_y(Y^{(i)}_n,u_n^i)^T \txii _n
+\sum_{i=1}^s  \,g_y(Y^{(i)}_n,u_n^i)^T\xii_n, \quad
p_N = j'(y_N),
 \end{eqnarray}
\end{subequations}
where the index range for $n$ is $N-1,..,0$ and the intermediate adjoint states have to be  computed for $i=1,..,s$.
 The discretization method \eref{ARK-1} is not yet in a standard RK notation.
Similar to \cite{ BonnansLaurent-Varin06,Hager00,
HertySchleper11} we have the following result.
\begin{prop}\label{ARK-reformulation}
 If we assume that $\tomega_i\neq0$ and $\omega_i\neq0$ then \eref{ARK-1} can be rewritten as
\begin{subequations}\label{ARK-2}
 \begin{eqnarray}
\tPi&  =&p_n - h\sum_{j=1}^s \tilde \alpha_{ij}\,f_y(Y^{(j)}_n,u_n^j)^T \tP^{(j)}
- h\sum_{j=1}^s \alpha_{ij}\,g_y(Y^{(j)}_n,u_n^j)^T P^{(j)}\label{ARK-2a}\\
\Pii&  =&p_n-h\sum_{j=1}^s \tilde \beta_{ij}\,f_y(Y^{(j)}_n,u_n^j)^T\tP^{(j)}
- h\sum_{j=1}^s \beta_{ij}\,g_y(Y^{(j)}_n,u_n^j)^T P^{(j)} \label{ARK-2b}\\
p_{n+1}&=& p_{n} - h\sum_{i=1}^s \tomega_i \,f_y(Y^{(i)}_n,u_n^i)^T \tPi
-h \sum_{i=1}^s \omega_i \,g_y(Y^{(i)}_n,u_n^i)^T \Pii, \label{ARK-2c}
 \end{eqnarray}
\end{subequations}
where the coefficients  $\tilde \alpha_{ij}, \alpha_{ij}, \tilde \beta_{ij}$
and $\beta_{ij}$  are given by
\begin{align*}
 \tilde \alpha_{ij}:= \tomega_j-\frac{\tomega_j}{\tomega_i} \ta_{ji},  \quad
\alpha_{ij}:= \omega_j-\frac{\omega_j}{\tomega_i} \ta_{ji}, \quad
\tilde \beta_{ij}:= \tomega_j-\frac{\tomega_j}{\omega_i} a_{ji}, \quad
\beta_{ij}:= \omega_j-\frac{\omega_j}{\omega_i} a_{ji} .
\end{align*}
\end{prop}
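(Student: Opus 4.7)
The proposition is essentially a change of variables identity, so the plan is to rescale the stage adjoints and then use the outer adjoint update to eliminate $p_{n+1}$ on the right-hand side.

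\emph{Step 1 (rescaling).} Since $\tomega_i\neq 0$ and $\omega_i\neq 0$, I would define the new stage adjoints by
\[
\tPi := \frac{\txii_n}{h\,\tomega_i}, \qquad \Pii := \frac{\xii_n}{h\,\omega_i}, \qquad i=1,\dots,s.
\]
This is the only place the nonvanishing hypothesis on the weights enters; all subsequent manipulations are algebraic.

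\emph{Step 2 (derive the update for $p_{n+1}$).} Substituting $\txi^{(i)}_n = h\tomega_i \tPi$ and $\xi^{(i)}_n = h\omega_i \Pii$ into the third equation of \eref{ARK-1} and solving for $p_{n+1}$ immediately gives \eref{ARK-2c}:
\[
p_{n+1}=p_n - h\sum_{i=1}^s \tomega_i\, f_y(Y^{(i)}_n,u_n^i)^T \tPi - h\sum_{i=1}^s \omega_i\, g_y(Y^{(i)}_n,u_n^i)^T \Pii.
\]

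\emph{Step 3 (derive \eref{ARK-2a}--\eref{ARK-2b}).} Insert the same substitutions into the first equation of \eref{ARK-1} and divide by $h\tomega_i$ to obtain
\[
\tPi = p_{n+1} + h\sum_{j=1}^s \frac{\tomega_j\,\ta_{ji}}{\tomega_i}\, f_y(Y^{(j)}_n,u_n^j)^T \tP^{(j)} + h\sum_{j=1}^s \frac{\omega_j\,\ta_{ji}}{\tomega_i}\, g_y(Y^{(j)}_n,u_n^j)^T P^{(j)}.
\]
Now replace $p_{n+1}$ using the formula from Step 2 and collect the $\tP^{(j)}$ and $P^{(j)}$ terms. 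The coefficient of $-h f_y^T\tP^{(j)}$ becomes $\tomega_j - \tomega_j\ta_{ji}/\tomega_i = \tilde\alpha_{ij}$, and the coefficient of $-h g_y^T P^{(j)}$ becomes $\omega_j - \omega_j\ta_{ji}/\tomega_i = \alpha_{ij}$, matching \eref{ARK-2a}. The same procedure applied to the second equation of \eref{ARK-1}, divided by $h\omega_i$, yields \eref{ARK-2b} with the coefficients $\tilde\beta_{ij}, \beta_{ij}$ displayed in the statement.

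\emph{Main obstacle.} There is no analytical difficulty; the only pitfall is bookkeeping, namely keeping the role of the transposed indices $\ta_{ji}$ and $a_{ji}$ consistent (they appear transposed compared with the forward scheme, which is responsible for the mixed subscript pattern $\tilde\alpha_{ij}$ vs.\ $\ta_{ji}$), and making sure the correct weight $\tomega_i$ or $\omega_i$ appears in the denominator of each coefficient. Once the substitutions of Step 1 are made, the remainder is a direct collection of terms.
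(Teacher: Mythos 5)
Your proposal is correct and follows the same route as the paper: the paper's proof consists precisely of the rescaling \eref{var-trans} (your Step 1) and then states that \eref{ARK-2} follows by substitution, leaving the elimination of $p_{n+1}$ via \eref{ARK-2c} and the collection of coefficients implicit. You have simply written out the algebra the paper omits, and your coefficient bookkeeping (including the transposed indices $\ta_{ji}$, $a_{ji}$) matches the stated $\tilde\alpha_{ij},\alpha_{ij},\tilde\beta_{ij},\beta_{ij}$.
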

\begin{proof}
If $\tomega_i\neq0$ and $\omega_i\neq0$, then we can define new variables
\begin{equation}\label{var-trans}
  \tPi_n:= \frac{\txii_n}{h \,\tomega_i} \qquad \mbox{and} \qquad  \Pii_n:= \frac{\xii_n}{h \,\omega_i}
\qquad (i=1,..,s; \quad n=0,..,N-1)\,.
\end{equation}
We obtain  \eref{ARK-2}  using the definition of $\Pii_n$ and $ \tPi_n$
in  $\xii_n$ and $\txii_n$, respectively.
\end{proof}

\vspace{1em}
\begin{remark}
Referring to the classification of IMEX--RK methods given in \cite{BPR11}, Proposition
\ref{ARK-reformulation} is extended to IMEX schemes of type ARS with $\omega_1=0$.
In this case, define $\Pii_n$ for $i=2,..,s$ and $\tPi_n$ for $i=1,..,s$ as in \eref{var-trans} and
use the transformation to obtain \eref{ARK-2a} and \eref{ARK-2b} for $i=1,..,s$ and $i=2,..,s$,
respectively, and for the further equation we set
\[P^{(1)}_n: =p_{n} - h\sum_{i=1}^s \tomega_i \,f_y(Y^{(i)}_n,u_n^i)^T \tPi
-h \sum_{i=1}^s \omega_i \,g_y(Y^{(i)}_n,u_n^i)^T \Pii\]
i.e. we use again \eref{ARK-2b} and define the coefficients
$\tilde \beta_{1j}:=\tomega_j$ and $\beta_{1j}:=\omega_j$.
The remaining coefficients are defined as in Proposition \ref{ARK-reformulation}.
\end{remark}

\subsection{Discrete and continuous optimality systems}

We prove  the following results on the relations
depicited in Figure \ref{fig-summary}. The
 discrete optimality system of \eref{DOP} represents
a RK discretization of the continuous optimality system \eref{osOCP}.
The system obtained by discretizing the continuous optimality system \eref{OCP}
and by optimizing the discretized optimal control problem coincide.

\begin{figure}[t]
\begin{center}
\includegraphics[height=8cm,width=12cm]{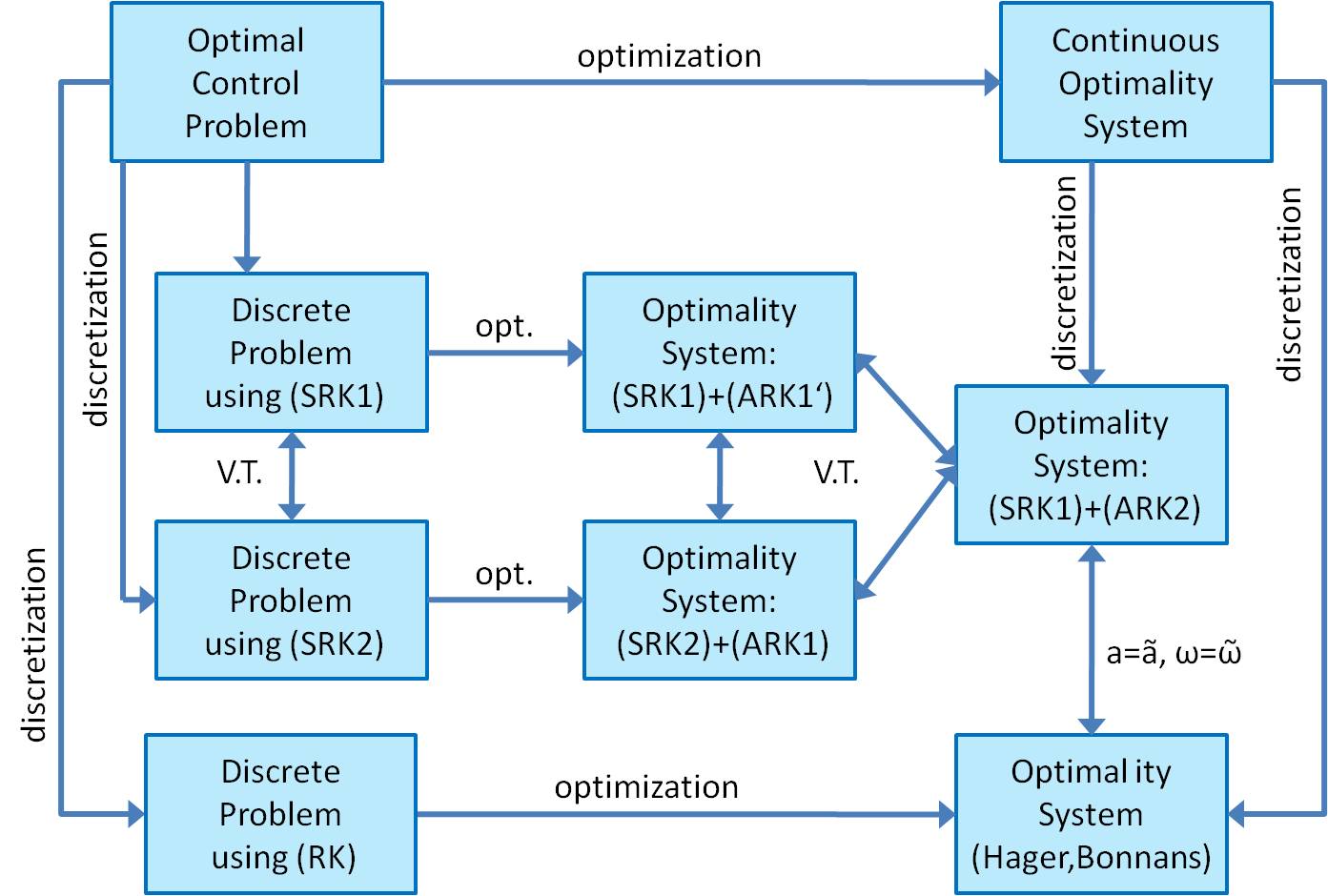}
\caption{Diagram summarizing the relations discussed. Here
``V.T.'' stands for ``variable transformation'',
(RK) denotes the single RK method for \eref{osOCPa}  as in \cite{BonnansLaurent-Varin06,Hager00},
(SRK1)  denotes system \eref{SRK-1}, (SRK2)
 denotes system \eref{SRK-2}, (ARK1) denotes system \eref{ARK-1},
(ARK2) denotes system \eref{ARK-2} and
(ARK1') denotes the discrete adjoint system
\eref{ARK-HS}.
\label{fig-summary}}
\end{center}
\end{figure}

\begin{thm}\label{discreteOS}
We consider the first--order necessary optimality conditions \eref{osOCP} of the optimal control problem \eref{OCP}.
\par
The equations \eref{SRK-1} and  \eref{ARK-2} are the discrete state and adjoint equations of the optimality system
to the problem
$ \min j(y_N) \mbox{ subject to }\eref{SRK-1}.$
\par
The equations \eref{SRK-1}  and \eref{ARK-2} are a discretization of the continuous state \eref{osOCPa} and
adjoint \eref{osOCPb} equation.
\end{thm}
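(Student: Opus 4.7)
The plan is to separate the two claims of Theorem \ref{discreteOS} and treat each using material already set up in the paper. For the first assertion, I would observe that (\ref{SRK-1}) and (\ref{SRK-2}) describe the same feasible set on $(y_n, u^i_n)$: the auxiliary variables $\tKi_n$ and $\Ki_n$ are merely labels for $f(Y^{(i)}_n,u^i_n)$ and $g(Y^{(i)}_n,u^i_n)$. Hence the problem $\min j(y_N)$ subject to (\ref{SRK-1}) shares its KKT system with (DOP), and the Lagrangian (\ref{lagrangian}) applies verbatim. Differentiating $\mathcal L$ with respect to $\tKi_n$, $\Ki_n$, and $y_n$ for $n=0,\dots,N-1$ reproduces the three lines of (\ref{ARK-1}); the $y_N$-derivative gives the terminal condition $p_N = j'(y_N)$. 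Applying Proposition \ref{ARK-reformulation} via the change of variables (\ref{var-trans}) then converts (\ref{ARK-1}) into (\ref{ARK-2}), completing the first claim.

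For the second assertion, (\ref{SRK-1}) is by construction an IMEX Runge--Kutta discretization of the state equation (\ref{osOCPa}). To recognize (\ref{ARK-2}) as a discretization of (\ref{osOCPb}), I would read the stage equations (\ref{ARK-2a}) and (\ref{ARK-2b}) as two families of RK stages for the linear backward ODE $\dot p = -(f_y+g_y)^T p$, propagated by the coefficient arrays $(\tilde \alpha_{ij}, \alpha_{ij})$ and $(\tilde \beta_{ij}, \beta_{ij})$ respectively, together with the update (\ref{ARK-2c}) carrying the weights $(\tomega_i, \omega_i)$. Consistency with (\ref{osOCPb}) then follows provided the row-sum identities for $\tilde \alpha,\alpha,\tilde \beta,\beta$ and the unit-sum conditions on $\tomega_i$ and $\omega_i$ are inherited from those of the forward scheme; these identities can be read off directly from the explicit formulas of Proposition \ref{ARK-reformulation}.

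The principal obstacle lies in the second half: (\ref{ARK-2}) introduces two stage arrays $\tPi$ and $\Pii$ that jointly propagate the single adjoint variable $p$, so the scheme does not fit the standard IMEX-RK template of (\ref{SRK-2}) component by component. The delicate step is to identify the correct additive/partitioned interpretation and to check that both $\tPi$ and $\Pii$ are consistent approximations of $p$ at the appropriate intermediate times. The cross-coupled definitions $\tilde \alpha_{ij} = \tomega_j - (\tomega_j/\tomega_i)\ta_{ji}$, $\alpha_{ij} = \omega_j - (\omega_j/\tomega_i)\ta_{ji}$, and their $\beta$-analogues mix the two Butcher arrays $(A,\omega)$ and $(\tilde A, \tomega)$, so one must verify carefully that the resulting mixed coefficients still yield a consistent RK-type scheme for the adjoint. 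Higher-order accuracy and the precise order-condition analysis are then properly deferred to Theorem 3.1.
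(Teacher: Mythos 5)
Your treatment of the first assertion is sound, and it essentially follows one of the two routes the paper itself records in Figure \ref{fig-summary}: you go $(DOP)\to$\eref{ARK-1}$\to$\eref{ARK-2} via Proposition \ref{ARK-reformulation} and the transformation \eref{var-trans}, whereas the paper's Lemma \ref{Lem1} works with $(DOP_1)$ directly, derives the system \eref{ARK-HS} in the multipliers $\zeta^{(i)}_n$, and uses the different transformation \eref{vartrans}. Since the theorem is phrased for the constraint set \eref{SRK-1}, i.e.\ for $(DOP_1)$, your route needs the additional observation that the Lagrangians of $(DOP)$ and $(DOP_1)$ have corresponding stationary points; the bijective reparametrization $\tKi_n=f(Y^{(i)}_n,u^i_n)$, $\Ki_n=g(Y^{(i)}_n,u^i_n)$ of the intermediate variables supplies this, and the paper explicitly endorses the interchangeability of the two formulations, so this half is acceptable.

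The genuine gap is in the second assertion, and you have in fact named it yourself without closing it. The stage equations \eref{ARK-2a}--\eref{ARK-2b} cannot be read as two separate RK stage families for the single backward ODE $\dot p=-(f_y+g_y)^Tp$: each family is driven by \emph{both} $\tP^{(j)}$ and $P^{(j)}$, so checking row sums of $\tilde\alpha,\alpha,\tilde\beta,\beta$ and unit sums of the weights does not by itself yield a well-posed RK interpretation, let alone consistency of both stage arrays with $p$. The paper's resolution (Lemma \ref{Lem2}) is a concrete construction absent from your proposal: introduce a duplicated adjoint $\tilde p$ obeying $\dot{\tilde p}=f_y^T\tilde p+g_y^Tp$ alongside $\dot p=f_y^T\tilde p+g_y^Tp$ with $\tilde p(T)=p(T)$, i.e.\ the extended system \eref{aux-hamiltonian}. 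Then \eref{SRK-1} and \eref{ARK-2}, together with the added update \eref{tilde p}, form a \emph{standard additive RK method} for $(y,\tilde p,p)$, with $\tPi$ the stages of $\tilde p$ and $\Pii$ the stages of $p$; since $\tilde p$ and $p$ satisfy the same equation with the same terminal data, $\tilde p\equiv p$, and both components are therefore discretizations of \eref{osOCPb}. Without this doubling device --- which the paper reuses verbatim in the order-condition analysis --- your ``correct additive/partitioned interpretation'' is never identified, so the second half of the theorem remains unproved as written.
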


The proof of Theorem \ref{discreteOS}  follows showing that
system  \eref{SRK-1} yields
a discretization scheme \eref{ARK-HS} for \eref{osOCPb} and the latter is  transformed into
\eref{ARK-2} by a   variable transformation of the  intermediate states.

\begin{lem}\label{Lem1}
Given the optimal control problem \eref{OCP} and the RK discretization \eref{SRK-1} for
the state equation \eref{osOCPa}. Then, the associated RK discretization of the discrete adjoint
equation is equivalent to \eref{ARK-2}.
\end{lem}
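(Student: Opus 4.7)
The plan is to imitate the derivation that produced \eref{ARK-1}--\eref{ARK-2}, but starting from the original IMEX form \eref{SRK-1} without introducing the auxiliary stage variables $\tKi$ and $\Ki$. First I would form the Lagrangian of the problem $\min j(y_N)$ subject to \eref{SRK-1}, using multipliers $p_n$ for the update equation and multipliers (say $q_n^{(i)}$) for each stage equation $Y_n^{(i)} = y_n + h\sum_j \ta_{ij} f(Y_n^{(j)},u_n^j) + h\sum_j a_{ij} g(Y_n^{(j)},u_n^j)$. Differentiating this Lagrangian with respect to $Y_n^{(i)}$ and with respect to $y_n$, and using the terminal condition $p_N = j'(y_N)$, produces the discrete adjoint system in an unscaled stage-multiplier form; this is the scheme I would label \eref{ARK-HS} in accordance with Figure~\ref{fig-summary}.

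Next I would rewrite \eref{ARK-HS} by introducing rescaled stage multipliers. Since the IMEX state scheme carries two distinct sets of weights, the natural rescaling is the IMEX analog of \eref{var-trans}: separate the contributions corresponding to the $\ta_{ji}$ block and the $a_{ji}$ block of the stage equation and distribute them over $\tomega_i$ and $\omega_i$ respectively to define two families of intermediate adjoint states $\tPi$ and $\Pii$. Substituting these definitions into \eref{ARK-HS} and collecting terms, the coefficients $\tilde\alpha_{ij}$, $\alpha_{ij}$, $\tilde\beta_{ij}$, $\beta_{ij}$ of Proposition~\ref{ARK-reformulation} appear in exactly the same combinations
\[
\tilde\alpha_{ij}= \tomega_j - \frac{\tomega_j}{\tomega_i}\ta_{ji},\quad
\alpha_{ij}= \omega_j - \frac{\omega_j}{\tomega_i}\ta_{ji},\quad
\tilde\beta_{ij}= \tomega_j - \frac{\tomega_j}{\omega_i}a_{ji},\quad
\beta_{ij}= \omega_j - \frac{\omega_j}{\omega_i}a_{ji},
\]
so that \eref{ARK-HS} becomes identical to \eref{ARK-2}. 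The $y_n$-derivative of the Lagrangian supplies exactly the update \eref{ARK-2c} after the rescaling, closing the identification.

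The step I expect to be the main technical obstacle is the algebraic identification of the rescaled system with \eref{ARK-2}: in the IMEX setting one must simultaneously account for the cross couplings between the $\ta$-block multipliers and the $\omega$-weights (and vice versa), which is what produces the asymmetric quotients $\tomega_j/\omega_i$ and $\omega_j/\tomega_i$ above. A clean way to handle this is to first do the bookkeeping abstractly by writing the stage equation of \eref{SRK-1} as a single linear combination in the block variables $(\tKi,\Ki)$, differentiate, and only then perform the rescaling; this avoids duplicating index manipulations. One should also note, as in the remark following Proposition~\ref{ARK-reformulation}, that if some $\omega_i$ or $\tomega_i$ vanishes the rescaling must be done componentwise and the corresponding row of coefficients is defined separately, but the equivalence statement of the lemma is unaffected. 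Once these rescalings are in place, matching coefficients term by term completes the proof.
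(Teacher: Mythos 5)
Your proposal follows essentially the same route as the paper: form the Lagrangian of the discretized problem $\min j(y_N)$ subject to \eref{SRK-1}, derive the unscaled adjoint system \eref{ARK-HS} by differentiating with respect to the stage variables and $y_n$, and then rescale the stage multipliers by the IMEX analog of the Hager transformation, i.e. \eref{vartrans}, to arrive at \eref{ARK-2}. The only detail you gloss over is that this substitution first produces a backward scheme anchored at $p_{n+1}$ with coefficients $\tomega_j\ta_{ji}/\tomega_i$, $\omega_j\ta_{ji}/\tomega_i$, $\tomega_j a_{ji}/\omega_i$, $\omega_j a_{ji}/\omega_i$, and one must then rewrite it as a forward scheme in $p_n$ to obtain the stated $\tilde\alpha_{ij},\alpha_{ij},\tilde\beta_{ij},\beta_{ij}$ --- which is exactly the paper's closing step.
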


\begin{proof}
The associated discretized optimal control problem is given by
\begin{subequations}\label{DOP1}
  \begin{align}
 (DOP_1) \quad           & \min  j(y_N) \\
 &Y^{(i)}_n = y_n + h\sum_{j=1}^{s} \ta_{ij}f(Y_n^{(j)},u_n^j) + h\sum_{j=1}^s
a_{ij}g(Y_n^{(j)},u_n^j),\\
& y_{n+1} = y_n +h\sum_{i=1}^s \tomega_if(Y_n^{(i)},u_n^i) +
h\sum_{i=1}^s \omega_ig(Y_n^{(i)},u_n^i), \quad y_0 = y^0.
  \end{align}
\end{subequations}
The stationary points of the Lagrangian yield  for $i=1,\dots,s$ and $n=1, \dots, N-1$
\begin{subequations}\label{ARK-HS}
 \begin{eqnarray}
\zeta^{(i)}_n&  =&h\left(\tomega_i f_y(Y_n^{(i)},u_n^i)+ \omega_i g_y(Y_n^{(i)},u_n^i)\right)^Tp_{n+1} \label{ARK-HSa} \\
&&\qquad + \sum_{j=1}^{s} \ta_{ji}\,f_y(Y_n^{(i)},u_n^i) ^T\zeta^{(j)} +  h\sum_{j=1}^s
a_{ji}\,g_y(Y_n^{(i)},u_n^i)^T \zeta^{(j)}  \\
p_{n}&=& p_{n+1} + \sum_{i=1}^s\zeta^{(i)}_n, \quad   p_N=j'(y_N)\label{ARK-HSb}\\
0&=& h\left(\tomega_i f_u(Y_n^{(i)},u_n^i)+ \omega_i g_u(Y_n^{(i)},u_n^i)\right)^Tp_{n+1}
 \label{ARK-HSc}\\
&&\qquad + \sum_{j=1}^{s} \ta_{ji}\,f_u(Y_n^{(i)},u_n^i) ^T\zeta^{(j)} + h\sum_{j=1}^s
a_{ji}\,g_u(Y_n^{(i)},u_n^i)^T \zeta^{(j)}
 \end{eqnarray}
\end{subequations}
The system \eref{ARK-HS}
is not in a standard RK formulation. As in \cite{Hager00,HertySchleper11}
 we  reformulate this system
as a standard partitioned RK method for $p$.
In contrast to \cite{Hager00}  we introduce two new variables:
\begin{equation}\label{vartrans}
 \tPi_n:= p_{n+1}+\sum_{j=1}^s \frac{\ta_{ji}}{\tomega_i}\zeta^{(j)}_n\qquad \mbox{and} \qquad
\Pii_n:=p_{n+1}+\sum_{j=1}^s \frac{a_{ji}}{\omega_i}\zeta^{(j)}_n,
\end{equation}
for all $i=1,..,s$ and $n=0,..,N-1$. Then, \eref{ARK-HS},
yields the equivalent formulation of  \eref{ARK-2} with coefficients
\begin{align*}
 \check\alpha_{ij}:= \frac{\tomega_j}{\tomega_i} \ta_{ji}, \quad
\hat\alpha_{ij}:=\frac{\omega_j}{\tomega_i} \ta_{ji}, \quad
\check \beta_{ij}:= \frac{\tomega_j}{\omega_i} a_{ji},  \quad
\hat\beta_{ij}:=\frac{\omega_j}{\omega_i} a_{ji} .
\end{align*}
Rewriting the resulting backward scheme as a forward scheme then yields \eref{ARK-2}.
\end{proof}

\begin{lem}\label{Lem2}
Assume  the optimal control problem \eref{OCP} and the RK discretization
scheme \eref{SRK-1} for the state equation \eref{osOCPa} are given.
The discrete state and adjoint equation  \eref{SRK-1} and \eref{ARK-2}, respectively, are
equivalent to a  RK discretization
of the continuous optimality system \eref{osOCP}.
\end{lem}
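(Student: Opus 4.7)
The plan is to decouple the claim into two independent verifications and then recombine them. Since (SRK-1) is by construction the IMEX--RK discretization of the state equation \eref{osOCPa} with tableaux $(\tilde A,\tomega)$ for the explicit part $f$ and $(A,\omega)$ for the stiff part $g$, it suffices to show that \eref{ARK-2} is itself a partitioned IMEX--RK discretization of the adjoint equation \eref{osOCPb}. Once both halves are identified, the full pair \eref{SRK-1}+\eref{ARK-2} is, by definition, a (partitioned additive) RK discretization of the coupled continuous optimality system \eref{osOCP}.

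For the adjoint part I would rewrite \eref{ARK-2} in Butcher form by reading off the stages $\tPi$ and $\Pii$ as two internal approximations to $p$ at the $i$-th stage: the slope $-f_y(Y^{(i)}_n,u_n^i)^T \tPi$ plays the role of the evaluation of the explicit vector field $-f_y^T p$ at the $i$-th stage, while $-g_y(Y^{(i)}_n,u_n^i)^T \Pii$ plays the role of the stiff vector field $-g_y^T p$. The four coefficient matrices $(\tilde\alpha_{ij}, \alpha_{ij}, \tilde\beta_{ij}, \beta_{ij})$ supplied by Proposition \ref{ARK-reformulation}, together with the weights $\tomega_i$ and $\omega_i$, then constitute the Butcher tableau of a partitioned additive IMEX--RK scheme whose update line is exactly \eref{ARK-2c}. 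The evaluation points of the Jacobians are precisely the state stages $(Y^{(i)}_n, u_n^i)$ produced by \eref{SRK-1}, so the adjoint stages share their abscissae with the state stages, which is exactly the requirement for applying one and the same partitioned RK method to the coupled system $(y,p)$.

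To confirm that this identification really yields a consistent RK discretization (as opposed to merely an algebraic rewriting), I would check that the coefficients inherit the appropriate row--sum conditions from the state tableau. Using the definitions of Proposition \ref{ARK-reformulation} one has, e.g.,
\begin{equation*}
\sum_{j=1}^s \tilde\alpha_{ij} = \sum_{j=1}^s \tomega_j - \frac{1}{\tomega_i}\sum_{j=1}^s \tomega_j\,\ta_{ji},
\end{equation*}
and analogous identities for $\alpha_{ij}$, $\tilde\beta_{ij}$, $\beta_{ij}$, which together with $\sum_j \tomega_j=\sum_j\omega_j=1$ produce well--defined stage abscissae for the adjoint scheme. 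Combined with Lemma \ref{Lem1}, which already guarantees that \eref{ARK-2} arises from the true discrete adjoint of \eref{SRK-1} via the invertible variable transformation \eref{vartrans}, this shows that \eref{ARK-2} is a genuine RK--type quadrature of \eref{osOCPb} along the state trajectory.

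The main obstacle I expect is conceptual rather than computational: the adjoint scheme is \emph{partitioned} with two independent stage vectors $\tPi$ and $\Pii$ for a single equation, which is non--standard and must be accepted as a legitimate RK discretization once one works in the IMEX/additive setting. Everything else reduces to bookkeeping on the coefficient relations in Proposition \ref{ARK-reformulation} and to observing that the state and adjoint schemes share the same weights $(\tomega,\omega)$ and the same stage evaluation points $Y^{(i)}_n$, so that their concatenation is one and the same partitioned IMEX--RK method applied to the system \eref{osOCP}.
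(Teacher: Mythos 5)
Your proposal correctly isolates the crux of the lemma but then does not resolve it. The difficulty is not the row--sum bookkeeping (which is harmless but beside the point: it pertains to first--order consistency, handled later in the order--condition analysis), nor the mere fact that the scheme is ``partitioned.'' The difficulty is that in \eref{ARK-2} the two additive pieces of the \emph{single} adjoint equation $\dot p = -f_y^Tp - g_y^Tp$ are evaluated at two \emph{different} stage vectors: $f_y^T$ is contracted against $\tP^{(j)}$ while $g_y^T$ is contracted against $P^{(j)}$. A standard additive/IMEX RK method for a single equation evaluates both vector fields at one and the same stage value (exactly as \eref{SRK-1} uses the single stage $Y^{(i)}_n$ for both $f$ and $g$), so \eref{ARK-2} as written is \emph{not} an additive RK discretization of \eref{osOCPb} in the standard sense. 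Declaring that this ``must be accepted as a legitimate RK discretization once one works in the IMEX/additive setting'' is precisely the step that requires proof, and your argument supplies no mechanism for it.

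The paper closes this gap with a concrete construction that your proposal is missing: duplicate the adjoint variable. One introduces an auxiliary unknown $\tilde p$ obeying $\dot{\tilde p}=f_y(y,u)^T\tilde p+g_y(y,u)^T p$ with the same terminal condition $\tilde p(T)=p(T)$, and appends the corresponding discrete update \eref{tilde p} for $\tilde p_{n+1}$. Then $\tP^{(i)}$ and $P^{(i)}$ become the stage values of the two \emph{distinct} components $\tilde p$ and $p$ of the extended system \eref{aux-hamiltonian}, and the triple \eref{SRK-1}, \eref{ARK-2}, \eref{tilde p} is a genuine standard additive RK method for that extended system. Since $\tilde p$ and $p$ satisfy the same ODE with the same terminal data, $\tilde p\equiv p$ at the continuous level, so solving the extended system is equivalent to solving \eref{osOCP}; this is what legitimizes reading \eref{ARK-2} as an RK discretization of \eref{osOCPb}. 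Without this auxiliary--variable device (or an equivalent one), your identification of a ``Butcher tableau'' for the adjoint scheme does not attach to any continuous equation, and the lemma is not proved.
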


\begin{proof}
 Note that adding a corresponding set of equalities for $\dot{\tilde p}$, the  system \eref{SRK-1}
together with \eref{ARK-2} corresponds to a standard additive RK scheme for the extended (modified)
 system with $\tilde p(T)= p(T)$ and
\begin{subequations}\label{aux-hamiltonian}
 \begin{eqnarray}
 \dot{ y}&=&f(y,u) + g(y,u)\label{aux-hamiltonian-a}\\
 \dot{\tilde p}&=&f_y(y,u)^T\tilde p + g_y(y,u)^Tp\label{aux-hamiltonian-b}\\
 \dot p&=&f_y(y,u)^T \tilde p + g_y(y,u)^Tp\label{aux-hamiltonian-c}
\end{eqnarray}
\end{subequations}
The system \eref{SRK-1} and \eref{ARK-2} form an additive RK method for \eref{aux-hamiltonian}.
Since $\tilde p$ and $p$ have the same initial conditions, we have
 $\tilde p \equiv p$. Therefore the solution to   \eref{aux-hamiltonian-b} and \eref{aux-hamiltonian-c}
 are equivalent to the solution to \eref{osOCPb}.
 Finally,  the approximate adjoints $p_h$ and $\tilde p_h$
 are the discretized solution to  \eref{osOCPb} for the same initial equations.
\end{proof}

 Theorem \ref{discreteOS} follows directly by Lemma \ref{Lem1} and Lemma \ref{Lem2}.
We have a few comments on the implications of the previous theorem.
\par
In view of \eref{ARK-2},  we obtain four additional RK coefficient matrices and therefore four RK  methods.
Furthemore, we have the two  RK methods for the IMEX discretization  \eref{SRK-1}.
If we additionally assume that $\omega_i = \tomega_i$  holds for all $i=1,..,s$, then we
obtain $\tilde \alpha_{ij}= \alpha_{ij}$
and $\tilde \beta_{ij}= \beta_{ij}$ for all $i,j = 1,..,s$. Hence we obtain only two additional RK schemes for the
optimality system \eref{osOCP}.
Moreover, the equation is independent of the discretization of \eref{osOCPa}.
This implies that either $(DOP)$ or $(DOP_1)$ can be used to discretize the problem.
\par
Also, if the Runge--Kutta  methods for $f$ and $g$ coincide, i.e. $\ta_{ij}=a_{ij}$ and
$\tomega_i=\omega_i$ for all $i,j$, then the discrete adjoint scheme
\eref{ARK-2} coincides with the one derived by Hager \cite{Hager00} and it also
coincides with  Bonnans
et al \cite{BonnansLaurent-Varin06}, respectively. We therefore recover their results.
If $g\equiv 0$, then an explicit RK method for $f$ in \eref{SRK-1} yields
a ``backward" explicit method for \eref{ARK-1} and an implicit method for
\eref{ARK-2}. Here, the variables $\xi_n^{(i)}$ and  $P_n^{(i)}$ vanish since there is
no contribution to  $p_{n}$ and $p_{n+1}$, respectively.
 If $f\equiv0$, then the backward variant of \eref{ARK-2}
 yields a``backward" diagonally implicit method for $g$.
\par
 The system \eref{ARK-1} is a  ``backward in time`` scheme for the adjoint variable $p$, with initial value
$j'(y_N)$. Note that if \eref{SRK-1} is  an implicit-explicit (IMEX) RK scheme with a diagonally
 implicit method (DIRK) for $g$, then \eref{ARK-1}  also is an IMEX method with
a diagonally implicit method for the terms that belong to $g$. Hence, the presentation
simplifies in this case which also has been discussed in \cite{HertySchleper11}.
 Although  \eref{ARK-2} is more suitable for  theoretical investigations,
 for an efficient implementation of the scheme we used formulation \eref{ARK-1}.
\par
 The discretization of equation  \eref{osOCPc}
 is straight--forward
\begin{equation}\label{Hu}
\tilde \omega_k  f_u(Y_n^{(k)},u_n^k)^T\tilde P^{(k)}_n + \omega_k  g_u(Y_n^{(k)},u_n^k)^TP^{(k)}_n=0.
\end{equation}
Next,  we prove that for a suitable
discretization $H^h(y_n,p_{n+1},u_n)$ of the Hamiltonian
$H(y,p,u)$  \eref{Hu} is equal to $\nabla_{u_k}H^h(y_n,p_{n+1},u_n)=0$.
Hence, Lemma  \ref{Lem-Hu} shows that \eref{Hu} is a valid discretization of
$H_u(y,p,u)=0$ in the limit $h\rightarrow 0$.

\begin{lem}\label{Lem-Hu}
 Let
$$
H^h(y_n,p_{n+1},u_n):=p_{n+1}^T\left[\sum_{i=1}^s (\tomega_i f^{(i)}+\omega_i g^{(i)} )\right]
 $$
with $f^{(i)}:=f(Y_n^{(i)},u_n^i)$ and $g^{(i)}:=g(Y_n^{(i)},u_n^i)$. Then,
\[
 \nabla_{u_k}H^h(y_n,p_{n+1},u_n)=
\tilde \omega_k  (f_u^{(k)})^T \tilde P^{(k)}_n + \omega_k (g_u^{(k)})^T P^{(k)}_n
\]
with $f_u^{(k)}:=f_u(Y_n^{(k)},u_n^k)$ and $g_u^{(k)}:=g_u(Y_n^{(k)},u_n^k)$.
\end{lem}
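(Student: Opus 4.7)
The plan is to view $H^h(y_n,p_{n+1},u_n)$ as a function of $u_n^k$ in which the stage derivatives $\tKi_n$ and $\Ki_n$---and therefore also $Y^{(i)}_n$---depend implicitly on $u_n^k$ through the stage equations in \eref{SRK-2}. A brute-force chain-rule computation would require solving a coupled linear system for the sensitivities $\partial\tKi_n/\partial u_n^k$ and $\partial\Ki_n/\partial u_n^k$; the point of the lemma is that these sensitivities never have to be computed, because the duality between the stage system and the discrete adjoint \eref{ARK-1} does all of the work.

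I would therefore introduce the auxiliary functional
$\Psi(u_n,\tK,K;\alpha,\beta) := p_{n+1}^T\sum_{i=1}^s(\tomega_i\tKi_n+\omega_i\Ki_n) + \sum_{i=1}^s\alpha_i^T(-\tKi_n+f(Y^{(i)}_n,u_n^i)) + \sum_{i=1}^s\beta_i^T(-\Ki_n+g(Y^{(i)}_n,u_n^i))$,
in which, exactly as in \eref{lagrangian}, $Y^{(i)}_n$ is read as the fixed linear combination $y_n+h\sum_j\ta_{ij}\tKj_n+h\sum_j a_{ij}\Kj_n$ of the now-independent variables $\tK$ and $K$. For any choice of multipliers $\alpha_i,\beta_i$, $\Psi$ coincides with $H^h$ whenever the stage equations in \eref{SRK-2} hold, and the total derivative with respect to $u_n^k$ therefore satisfies $\nabla_{u_n^k}H^h = \nabla_{u_n^k}\Psi + \sum_j(\partial\tKj_n/\partial u_n^k)^T(\partial\Psi/\partial\tKj_n) + \sum_j(\partial\Kj_n/\partial u_n^k)^T(\partial\Psi/\partial\Kj_n)$. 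The idea is to pick $\alpha,\beta$ so that the last two sums vanish.

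Using $\partial Y^{(l)}_n/\partial\tKi_n=h\,\ta_{li}I$ and $\partial Y^{(l)}_n/\partial\Ki_n=h\,a_{li}I$, the stationarity conditions $\partial\Psi/\partial\tKi_n=0$ and $\partial\Psi/\partial\Ki_n=0$ become $\alpha_i = \tomega_i p_{n+1} + h\sum_j\ta_{ji}(f_y^{(j)})^T\alpha_j + h\sum_j\ta_{ji}(g_y^{(j)})^T\beta_j$ and $\beta_i = \omega_i p_{n+1} + h\sum_j a_{ji}(f_y^{(j)})^T\alpha_j + h\sum_j a_{ji}(g_y^{(j)})^T\beta_j$. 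Comparing with \eref{ARK-1} one recognises this as the discrete adjoint system divided through by $h$, so $\alpha_i=\txii_n/h$ and $\beta_i=\xii_n/h$; the variable transformation \eref{var-trans} then rewrites these as $\alpha_i=\tomega_i\tPi_n$ and $\beta_i=\omega_i\Pii_n$.

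Once this identification is in hand only the explicit $u_n^k$-dependence of $\Psi$ matters, and that appears solely through $f(Y^{(k)}_n,u_n^k)$ and $g(Y^{(k)}_n,u_n^k)$. Hence $\nabla_{u_n^k}H^h = \nabla_{u_n^k}\Psi = (f_u^{(k)})^T\alpha_k + (g_u^{(k)})^T\beta_k = \tomega_k(f_u^{(k)})^T\tilde P^{(k)}_n + \omega_k(g_u^{(k)})^T P^{(k)}_n$, which is the assertion. The only real difficulty is the bookkeeping in the preceding step, where one must carefully recognise the stationarity conditions on $\Psi$ as exactly the discrete adjoint system \eref{ARK-1} so that the multipliers introduced as a computational convenience turn out to be the genuine adjoint variables of the underlying problem.
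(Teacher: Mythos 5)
Your proof is correct, and it reaches the identity by a genuinely different (though dual) route from the paper's. The paper works with the $Y$-stage formulation \eref{DOP1}: it differentiates $H^h$ through the stages, writes the forward sensitivity as $\nabla_{u_k}Y_n = h\bigl((f_u^{(k)})^T\tilde D_k+(g_u^{(k)})^T D_k\bigr)M^{-1}$ with $M=\mathbf{Id}-hB$, and then collapses the product $M^{-1}Cp_{n+1}$ back to $\zeta_n/h$ using the adjoint stage system \eref{Lem-Hu1} and the transformation \eref{vartrans}. You instead work with the $(\tK,K)$-formulation \eref{SRK-2}, augment $H^h$ by the stage residuals with free multipliers $\alpha_i,\beta_i$, and choose the multipliers to annihilate the sensitivity terms; the stationarity conditions you obtain are exactly \eref{ARK-1} divided by $h$, so $\alpha_i=\txii_n/h=\tomega_i\tPi_n$ and $\beta_i=\xii_n/h=\omega_i\Pii_n$ by \eref{var-trans}, and only the explicit $u_n^k$-dependence survives. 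The two arguments encode the same duality --- the paper evaluates $v^TM^{-1}w$ by solving forward in $w$, you evaluate it by solving the transposed system in $v$ --- but yours never forms the forward sensitivities $\nabla_{u_k}Y_n$ or $\partial\tKj_n/\partial u_n^k$ at all, which makes the bookkeeping lighter and makes transparent why the multipliers are the genuine adjoint variables rather than a computational accident. Two small points you leave implicit, both harmless: the stationarity system for $(\alpha,\beta)$ is uniquely solvable only for $h$ sufficiently small (the same hypothesis the paper needs to invert $M$), and the identification of your $\tPi_n,\Pii_n$ with those appearing in the lemma statement rests on the equivalence of the transformations \eref{var-trans} and \eref{vartrans}, which Theorem \ref{discreteOS} supplies.
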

\begin{proof}
 In order to simplify the notation, we introduce the following matrices
\begin{align*}
 B&: s\times s \mbox{ block matrix with block entries } (i,j) : [\tilde a_{ji}(f_y^{(i)})^T+ a_{ji}(g_y^{(i)})^T],\qquad i,j=1,..,s\\
 C&: s\times 1 \mbox{ block matrix with block entries } \qquad  : [\tilde \omega_i (f_y^{(i)})^T+ \omega_i (g_y^{(i)})^T],\qquad i=1,..,s\\
 \tilde D_k&: 1\times s \mbox{ block matrix with block entries } \qquad  : [\tilde a_{ik}\mathbf{Id}], \qquad i=1,..,s\\
D_k&: 1\times s \mbox{ block matrix with block entries }  \qquad : [ a_{ik}\mathbf{Id}], \qquad i=1,..,s
\end{align*}
with $f_y^{(i)}:=f_y(Y_n^{(i)},u_n^i)$ and $g_y^{(i)}:=g_y(Y_n^{(i)},u_n^i)$.
Moreover, define $M:=(\mathbf{Id}-hB)$ and note that $M$ is invertible if $h$ is sufficiently small.
Using the just defined matrices, we  rewrite \eref{ARK-HSa} as
\begin{equation}\label{Lem-Hu1}
 M\zeta_n=h C p_{n+1}
\end{equation}
where $\zeta_n=((\zeta_n^{(1)})^T,...,(\zeta_n^{(s)})^T)^T$ and the equations of \eref{vartrans}
are
\begin{equation}\label{Lem-Hu2}
 \tilde \omega_k \tilde P_n^{(k)}=\tilde \omega_kp_{n+1}+\tilde D_k\zeta_n\qquad \mbox{and}\qquad
\omega_k P_n^{(k)}= \omega_kp_{n+1}+ D_k\zeta_n.
\end{equation}
Furthermore, differentiating \eref{SRK-1} with respect to $u_k$ yields
\begin{equation}\label{Lem-Hu3}
 \nabla_{u_k}Y_nM=h (f_u^{(k)})^T\tilde D_k+h (g_u^{(k)})^T D_k,
\end{equation}
where $Y_n=((Y_n^{(1)})^T,...,(Y_n^{(s)})^T)^T$. Now,  we  use \eref{Lem-Hu1}, \eref{Lem-Hu2}
and \eref{Lem-Hu3} and evaluate the gradient of $H^h(y_n,p_{n+1},u_n)$ with respect to $u_k$
\begin{eqnarray*}
 \nabla_{u_k}H^h(y_n,p_{n+1},u_n) = \\
\sum_{i=1}^s \left(\tilde \omega_i \nabla_{u_k}Y_n^{(i)}(f_y^{(i)})^Tp_{n+1} +
\omega_i \nabla_{u_k}Y_n^{(i)}(g_y^{(i)})^Tp_{n+1}\right)
 + \;\tilde \omega_k (f_u^{(k)})^Tp_{n+1} +  \omega_k (g_u^{(k)})^Tp_{n+1} \\
=  \nabla_{u_k}Y_n Cp_{n+1} + \tilde \omega_k (f_u^{(k)})^Tp_{n+1} +  \omega_k (g_u^{(k)})^Tp_{n+1} \\
=  \left(h (f_u^{(k)})^T\tilde D_k+h (g_u^{(k)})^T D_k\right)M^{-1}Cp_{n+1}
+ \;\tilde \omega_k (f_u^{(k)})^Tp_{n+1} +  \omega_k (g_u^{(k)})^Tp_{n+1} \\
=  (f_u^{(k)})^T(\tilde D_k\zeta_n+\tilde \omega_k p_{n+1}) +
(g_u^{(k)})^T(D_k\zeta_n+ \omega_k p_{n+1})\\
= \tilde \omega_k  (f_u^{(k)})^T \tilde P^{(k)}_n + \omega_k (g_u^{(k)})^T P^{(k)}_n.
\end{eqnarray*}
\hfill
\end{proof}

\section{ Properties of the IMEX Runge-Kutta discretizations}
Theoretical properties of the derived RK method for
system \eref{osOCP} corresponding
to the partitioned RK method
are considered.

\subsection{Order conditions}
We analyse the order conditions  for the RK method \eref{SRK-1}
together with \eref{ARK-2}. As in the proof of Lemma \ref{Lem2} we add
 the additional equation
\begin{equation}\label{tilde p}
 \tilde p_{n+1}= \tilde p_{n} - h\sum_{i=1}^s \tomega_i \,f_y(Y^{(i)}_n,u_n^i)^T \tPi
-h \sum_{i=1}^s \omega_i \,g_y(Y^{(i)}_n,u_n^i)^T \Pii,
\end{equation}
for $\tilde p$ such that the resulting method corresponds to a standard additive RK scheme for the
auxiliary problem \eref{aux-hamiltonian}. Since the system is completely
coupled,  we  also  obtain a similar coupling in the order conditions \cite{KC03}.
 \par
We  start with the analysis of first and second order conditions in the general case,
i.e. $A\neq \tilde A$ and $\omega\neq \tomega$.  We then restrict ourselves to the case
  $\tomega=\omega.$ In this case \textbf{all} additional coupling conditions for first and second
order are directly satisfied by the order condition for the forward IMEX scheme \eref{SRK-1}.
For third order we obtain  an additional condition.  If we consider the adjoint equation alone
those conditions have been studied in \cite{HertySchleper11} for a decoupled system
\eref{SRK-1}, \eref{ARK-2}.

 Define the coefficients
\begin{align*}
 c_i :=\sum_{j=1}^s a_{ij}, \;\;
\tilde c_i :=\sum_{j=1}^s \ta_{ij}, \;\;
\gamma_i  := \sum_{j=1}^s \alpha_{ij}, \;\;
 \tilde \gamma_i := \sum_{j=1}^s \tilde\alpha_{ij}, \;\;
\delta_i  := \sum_{j=1}^s \beta_{ij}, \;\;
 \tilde \delta_i := \sum_{j=1}^s \tilde\beta_{ij},
\end{align*}
and
\begin{eqnarray*}
d_j= \sum_{i=1}^s  \omega_i \tilde a_{ij}, \quad
\tilde d_j= \sum_{i=1}^s  \tomega_i \tilde a_{ij}, \quad
e_j= \sum_{i=1}^s  \omega_i  a_{ij} \quad  \mbox{and} \quad
\tilde e_j= \sum_{i=1}^s  \tomega_i a_{ij}.
\end{eqnarray*}

\begin{prop}\label{prop-order}
 Consider the additive Runge-Kutta method  \eref{SRK-1}
together with \eref{ARK-2} and \eref{tilde p} as a discretization scheme for \eref{aux-hamiltonian}.
For \eref{SRK-1} consider a diagonally implicit IMEX--RK method.
Then the following results hold true.
\begin{enumerate}
 \item The additive method is of first order, if the diagonally implicit IMEX--RK method \eref{SRK-1} is of first order.
\item The additive method is of second order, if the diagonally implicit IMEX--RK method \eref{SRK-1} is of second
order and  it additionally satisfies the following coupling conditions
\begin{equation}\label{2ordercond}
 \sum_{i=1}^s \frac{\omega_i}{\tomega_i}\, d_i =\frac{1}{2},\qquad
 \sum_{i=1}^s \frac{\omega_i}{\tomega_i}\, \tilde d_i =\frac{1}{2},\qquad
 \sum_{i=1}^s \frac{\tomega_i}{\omega_i}\, e_i =\frac{1}{2},\qquad
 \sum_{i=1}^s \frac{\tomega_i}{\omega_i}\,\tilde e _i =\frac{1}{2}.
\end{equation}
\end{enumerate}
\end{prop}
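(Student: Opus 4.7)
The plan is to identify the combined scheme \eref{SRK-1}, \eref{ARK-2}, and \eref{tilde p} as a standard additive partitioned Runge--Kutta method applied to the autonomous augmented system \eref{aux-hamiltonian}, and then invoke the classical order-condition theory (Butcher series on bicolored rooted trees) on that augmented system. Under this identification the six tableaux $\tilde A, A, \tilde\alpha, \alpha, \tilde\beta, \beta$ together with the weights $(\tomega,\omega)$ specify the method, and the stage argument $Y^{(i)}$ is shared between the forward and adjoint components through the dependence of $f_y$ and $g_y$ on $y$.

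For the first statement, I would Taylor-expand the exact flow and the numerical flow of \eref{aux-hamiltonian} to order $h$. Consistency of the $y$-component is equivalent to $\sum_i\tomega_i=\sum_i\omega_i=1$, which are precisely the first-order conditions of the IMEX scheme \eref{SRK-1}. Since \eref{ARK-2c} and \eref{tilde p} carry the same weights $(\tomega_i,\omega_i)$ in front of $f_y^T\tPi$ and $g_y^T\Pii$, consistency of the $\tilde p$- and $p$-components imposes no further conditions.

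For the second statement, I would extract the $O(h^2)$ terms in each component. The $y$-component reproduces the four classical second-order IMEX conditions
\[
\sum_i\tomega_i\tilde c_i=\sum_i\omega_i c_i=\sum_i\tomega_i c_i=\sum_i\omega_i\tilde c_i=\tfrac12,
\]
which hold by hypothesis. In the $\tilde p$- and $p$-components, the second-order conditions split into two families. The first family arises from differentiating $f_y(Y^{(i)})$ and $g_y(Y^{(i)})$ with respect to $h$; it introduces the forward row sums $\tilde c_i, c_i$ and thereby reduces to the IMEX conditions above. The second family arises from differentiating the adjoint stage values $\tPi, \Pii$ and introduces the adjoint row sums $\tilde\gamma_i,\gamma_i,\tilde\delta_i,\delta_i$. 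Using Proposition \ref{ARK-reformulation} together with $\sum_j\tomega_j=\sum_j\omega_j=1$, one computes
\[
\tilde\gamma_i=1-\frac{\tilde d_i}{\tomega_i},\quad \gamma_i=1-\frac{d_i}{\tomega_i},\quad \tilde\delta_i=1-\frac{\tilde e_i}{\omega_i},\quad \delta_i=1-\frac{e_i}{\omega_i}.
\]
Inserting these into the same-weight conditions $\sum_i\tomega_i\tilde\gamma_i=\sum_i\tomega_i\gamma_i=\sum_i\omega_i\tilde\delta_i=\sum_i\omega_i\delta_i=\tfrac12$ and using the identities $\sum_i\tilde d_i=\sum_j\tomega_j\tilde c_j=\tfrac12$, $\sum_i d_i=\sum_j\omega_j\tilde c_j=\tfrac12$, $\sum_i\tilde e_i=\sum_j\tomega_j c_j=\tfrac12$, $\sum_i e_i=\sum_j\omega_j c_j=\tfrac12$, one obtains tautologies that follow from the second-order IMEX conditions. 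The remaining cross-weight conditions $\sum_i\omega_i\gamma_i=\sum_i\omega_i\tilde\gamma_i=\sum_i\tomega_i\delta_i=\sum_i\tomega_i\tilde\delta_i=\tfrac12$ collapse under the same substitution to exactly the four coupling relations in \eref{2ordercond}.

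The main obstacle is the bookkeeping of all bicolored two-node trees for the partitioned method on the three-component system \eref{aux-hamiltonian}, and in particular distinguishing contributions from differentiating the shared argument $Y^{(i)}$ (which involve the forward row sums $\tilde c_i, c_i$) from those obtained by differentiating the adjoint stage values $\tPi, \Pii$ (which involve the adjoint row sums). Once this enumeration is carried out, the algebraic reduction via Proposition \ref{ARK-reformulation} and the identification with \eref{2ordercond} is routine.
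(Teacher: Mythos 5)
Your proposal is correct and follows essentially the same route as the paper: both treat \eref{SRK-1}, \eref{ARK-2}, \eref{tilde p} as an additive (partitioned) RK discretization of \eref{aux-hamiltonian}, reduce the second-order requirements to weighted row sums of the adjoint tableaux via the identities $\gamma_i=1-d_i/\tomega_i$, $\delta_i=1-e_i/\omega_i$ (and their tilded versions), show that the same-weight combinations $\sum_i\tomega_i\gamma_i$, $\sum_i\tomega_i\tilde\gamma_i$, $\sum_i\omega_i\delta_i$, $\sum_i\omega_i\tilde\delta_i$ collapse to the second-order IMEX coupling conditions, and identify the cross-weight combinations with \eref{2ordercond}. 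The only difference is presentational: you phrase the enumeration in the language of bicolored trees and defer it as routine, whereas the paper simply writes down and verifies the same list of weighted row-sum identities.
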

\begin{proof}
The first part of the theorem is trivial. For the second part we prove
\[
\qquad  \sum_{i=1}^s \omega_i \, \gamma_i =\frac{1}{2},
\qquad   \sum_{i=1}^s \omega_i \, \delta_i =\frac{1}{2}.
\]
By the second order of the IMEX--RK method \eref{SRK-1} and the
definition of $\gamma_i$ we have
\[
\sum_{i=1}^s \tomega_i \,\gamma_i =\sum_{i=1}^s \tomega_i - \sum_{i=1}^s
\sum_{j=1}^s \omega_j a_{ij} = 1- \sum_{i=1}^s w_j\, \tilde c_j = \frac{1}{2}.
\]
In the same way we  prove that
\[\sum_{i=1}^s \tomega_i \,\tilde \gamma_i
=\frac{1}{2},\qquad \sum_{i=1}^s \omega_i \, \delta_i =\frac{1}{2} \qquad \mbox{and} \qquad
\sum_{i=1}^s \omega_i \, \tilde \delta_i =\frac{1}{2}
\]
hold, if the second order conditions for \eref{SRK-1} are satisfied. However, since the remaining
coupling conditions cannot be simplified in the same way, we need to impose further the
 conditions \eref{2ordercond}. By \eref{2ordercond} we get
\[
  \sum_{i=1}^s \omega_i \, \gamma_i = 1 -  \sum_{i=1}^s \omega_i
\left( \sum_{j=1}^s\frac{\omega_j}{\tomega_i}\tilde a_{ji}\right)=
1-\sum_{i=1}^s \frac{\omega_i}{\tomega_i}\, d_i =\frac{1}{2}.
\]
Accordingly, the remaining conditions hold true
\[\sum_{i=1}^s \omega_i \,\tilde \gamma_i
=\frac{1}{2},\qquad \sum_{i=1}^s \tomega_i \, \delta_i =\frac{1}{2} \qquad \mbox{and} \qquad
\sum_{i=1}^s \tomega_i \, \tilde \delta_i =\frac{1}{2}.
\]
\hfill\end{proof}

The order conditions apply to all variables $y,p$ and $\tilde p$ and therefore
 $y$ and $p$ satisfy in particular equation \eref{osOCP}.
 Moreover, if $\omega_i=\tomega_i$ for all $i=1,..,s$ then the additional
second order conditions \eref{2ordercond} are  satisfied.
\begin{cor}\label{cor-order}
If $\omega_i=\tomega_i$ for all $i=1,..,s$ and the diagonally implicit IMEX--RK method \eref{SRK-1}
 is of second order, then the additive RK method  \eref{SRK-1}
together with \eref{ARK-2} and \eref{tilde p} is of second order.
\end{cor}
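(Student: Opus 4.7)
The plan is to invoke Proposition 3.1 and verify that, under the hypothesis $\omega_i = \tomega_i$ for all $i$, the four coupling conditions \eref{2ordercond} are automatic consequences of the second order conditions already assumed on \eref{SRK-1}.

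First I would observe that when $\omega_i = \tomega_i$, every ratio $\omega_i/\tomega_i$ and $\tomega_i/\omega_i$ collapses to $1$, so the four conditions in \eref{2ordercond} reduce to
\[
\sum_{i=1}^s d_i = \sum_{i=1}^s \tilde d_i = \sum_{i=1}^s e_i = \sum_{i=1}^s \tilde e_i = \tfrac12 .
\]
Each of these is a double sum that can be reordered: for instance
\[
\sum_{i=1}^s d_i = \sum_{i=1}^s\sum_{k=1}^s \omega_k \,\tilde a_{ki}
= \sum_{k=1}^s \omega_k \sum_{i=1}^s \tilde a_{ki} = \sum_{k=1}^s \omega_k \, \tilde c_k,
\]
and analogously $\sum_i \tilde d_i = \sum_k \tomega_k \tilde c_k$, $\sum_i e_i = \sum_k \omega_k c_k$, $\sum_i \tilde e_i = \sum_k \tomega_k c_k$.

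Next I would recall that second order of the IMEX--RK scheme \eref{SRK-1} requires the standard coupling conditions $\sum_k \omega_k \tilde c_k = \sum_k \tomega_k c_k = \tfrac12$ as well as the individual conditions $\sum_k \omega_k c_k = \sum_k \tomega_k \tilde c_k = \tfrac12$ on each tableau. Combined with $\omega_i = \tomega_i$, these four identities immediately give the four sums above equal to $\tfrac12$. Hence \eref{2ordercond} holds and Proposition \ref{prop-order}(2) applies, proving that the additive method for \eref{aux-hamiltonian}, and therefore for $y$ and $p$, is of second order.

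I do not anticipate a real obstacle: the only thing to check carefully is the bookkeeping of the four sums and the correct identification of which second order IMEX conditions are being invoked at each step. No new conditions on the tableaux are introduced beyond those already required for second order of \eref{SRK-1}.
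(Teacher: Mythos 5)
Your argument is correct and is essentially identical to the paper's own proof: with $\omega_i=\tomega_i$ the coupling conditions \eref{2ordercond} reduce to $\sum_i d_i=\sum_i \tilde d_i=\sum_i e_i=\sum_i \tilde e_i=\tfrac12$, and interchanging the order of summation turns each of these into one of the standard second--order (coupling or individual) conditions $\sum_k\omega_k\tilde c_k$, $\sum_k\tomega_k\tilde c_k$, $\sum_k\omega_k c_k$, $\sum_k\tomega_k c_k=\tfrac12$ already guaranteed by the second order of \eref{SRK-1}. No gap; the bookkeeping and the appeal to Proposition \ref{prop-order} are exactly what the paper does.
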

\begin{proof}
Since,
\begin{equation}
 \sum_{j=1}^s d_j=\sum_{i,j=1}^s \omega_j\tilde a_{ji}= \sum_{j=1}^s\omega_j
\sum_{i=1}^s \tilde a_{ji}=  \sum_{j=1}^s\omega_j\tilde c_j =\frac{1}{2}
\end{equation}
and in the same way
\[
  \sum_{j=1}^s \tilde d_j=\frac{1}{2},\qquad
 \sum_{j=1}^s e_j=\frac{1}{2}\qquad  \mbox{and}\qquad
 \sum_{j=1}^s \tilde e_j=\frac{1}{2},
\]
the conditions of \eref{2ordercond} hold.
\end{proof}

\begin{thm}\label{thm-3order}
If $\omega_i=\tomega_i$ for all $i=1,..,s$ and the diagonally implicit IMEX--RK method \eref{SRK-1}
 is of third order, then the additive RK method  \eref{SRK-1}
together with \eref{ARK-2} and \eref{tilde p} is of third order, provided that
\begin{equation}\label{3ordercond}
\sum_{i=1}^s \frac{d_i ^2}{\omega_i}=\frac{1}{3},\qquad
\sum_{i=1}^s \frac{e_i ^2}{\omega_i}=\frac{1}{3}\qquad \mbox{and}\qquad
\sum_{i=1}^s  \frac{d_i\,e_i }{\omega_i}=\frac{1}{3}
\end{equation}
is satisfied.
\end{thm}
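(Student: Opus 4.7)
The plan is to apply the theory of order conditions for additive Runge--Kutta methods (cf.~\cite{KC03}) to the full partitioned scheme \eref{SRK-1}, \eref{ARK-2}, \eref{tilde p} regarded as an additive RK discretization of the auxiliary system \eref{aux-hamiltonian}. Since \eref{SRK-1} is third order by hypothesis, every order-three tree whose stages live entirely on the $y$-component is automatically satisfied; by Corollary~\ref{cor-order}, first and second order already hold for the full partitioned method. What remains is to check the order-three conditions associated with trees that contain nodes on the adjoint components $\tilde p$ and $p$.

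First, under $\omega_i = \tomega_i$, a direct computation from the definitions in Proposition~\ref{ARK-reformulation} gives
\begin{equation*}
\gamma_i = \tilde\gamma_i = 1 - \frac{d_i}{\omega_i}, \qquad
\delta_i = \tilde\delta_i = 1 - \frac{e_i}{\omega_i},
\end{equation*}
so the $p$-trees and $\tilde p$-trees collapse in pairs, in accordance with $\tilde p\equiv p$ from Lemma~\ref{Lem2}. The remaining new third-order conditions then split into ``cascade'' conditions of the form $\sum_{i,j}\omega_i \alpha_{ij} c_j = 1/6$ (and the analogues with $\tilde\alpha,\beta,\tilde\beta$ and with $\tilde c_j$), and ``quadratic bush'' conditions
\begin{equation*}
\sum_{i=1}^s \omega_i\,\gamma_i^2 = \tfrac{1}{3}, \quad
\sum_{i=1}^s \omega_i\,\delta_i^2 = \tfrac{1}{3}, \quad
\sum_{i=1}^s \omega_i\,\gamma_i\delta_i = \tfrac{1}{3}.
\end{equation*}

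Next I would dispatch the cascade family using the third-order hypothesis on \eref{SRK-1}. Exchanging the order of summation and using the identities $\sum_j \omega_j \ta_{ji}=d_i$ and $\sum_j \omega_j a_{ji}=e_i$, each cascade sum reduces to a linear combination of $\sum_i \omega_i c_i^2$, $\sum_i \omega_i \tilde c_i^2$, $\sum_i \omega_i c_i \tilde c_i$ (all equal to $1/3$ by the third-order of \eref{SRK-1}) together with $\sum_i d_i = \sum_i e_i = 1/2$ from Corollary~\ref{cor-order}; hence the cascade family holds without any extra assumption. For the quadratic sums, expanding $\gamma_i = 1 - d_i/\omega_i$ and using $\sum_i d_i = 1/2$ yields $\sum_i \omega_i \gamma_i^2 = \sum_i d_i^2/\omega_i$, and analogously $\sum_i \omega_i \delta_i^2 = \sum_i e_i^2/\omega_i$ and $\sum_i \omega_i \gamma_i \delta_i = \sum_i d_i e_i/\omega_i$. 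The three bush conditions therefore hold if and only if \eref{3ordercond} is imposed, which completes the proof.

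The main obstacle is combinatorial rather than analytical: producing a complete, non-redundant list of third-order rooted trees for the three-component additive system and correctly attaching the right coefficient to each node---keeping in mind that $\alpha_{ij}$ and $\tilde\alpha_{ij}$ are built from $\ta_{ji}$ while $\beta_{ij}$ and $\tilde\beta_{ij}$ are built from $a_{ji}$. Once $\omega=\tomega$ collapses the $(p,\tilde p)$ pairs and the forward third-order conditions kill the cascades, the only residual requirements are the three quadratic identities \eref{3ordercond}.
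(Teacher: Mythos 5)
Your overall strategy coincides with the paper's: use $\omega=\tomega$ to collapse the tilded and untilded adjoint coefficients via $\gamma_i=\tilde\gamma_i=1-d_i/\omega_i$ and $\delta_i=\tilde\delta_i=1-e_i/\omega_i$, dispatch the tall-tree (cascade) conditions using the third-order conditions of the forward IMEX scheme, and reduce the quadratic bush conditions to \eref{3ordercond}. The computations you do carry out are correct and are literally the ones in the paper: $\sum_i\omega_i\gamma_i^2=\sum_i d_i^2/\omega_i$, $\sum_i\omega_i\gamma_i\delta_i=\sum_i d_ie_i/\omega_i$, and $\sum_{i,j}\omega_i\alpha_{ij}c_j=\tfrac12-\sum_j\omega_j\tilde c_jc_j=\tfrac16$ (for the latter you do not in fact need $\sum_i d_i=\sum_i e_i=\tfrac12$, only the second-order condition $\sum_j\omega_jc_j=\tfrac12$).

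The gap is precisely where you locate it yourself and then do not close it: your enumeration of the residual third-order conditions is incomplete, and your concluding claim that ``the only residual requirements are the three quadratic identities'' is not borne out by the paper's own list. Beyond your two families, the paper also verifies (i) the mixed quadratic conditions $\sum_i\omega_i c_i\gamma_i$, $\sum_i\omega_i\tilde c_i\gamma_i$, $\sum_i\omega_i c_i\delta_i$, $\sum_i\omega_i\tilde c_i\delta_i=\tfrac13$, which follow from the coupled third-order conditions of \eref{SRK-1}; (ii) the cascades with adjoint leaves, $\sum_{i,j}\omega_i\beta_{ij}\gamma_j$ and its three analogues, which reduce to (i); and, most importantly, (iii) a second set of coupling conditions $\sum_{i,j}\omega_i a_{ij}\gamma_j$, $\sum_{i,j}\omega_i\ta_{ij}\gamma_j$, $\sum_{i,j}\omega_i a_{ij}\delta_j$, $\sum_{i,j}\omega_i\ta_{ij}\delta_j=\tfrac16$. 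Family (iii) does \emph{not} follow from the forward order conditions: these sums equal $\tfrac12-\sum_j d_je_j/\omega_j$, $\tfrac12-\sum_j d_j^2/\omega_j$, $\tfrac12-\sum_j e_j^2/\omega_j$ and $\tfrac12-\sum_j d_je_j/\omega_j$ respectively, so they invoke \eref{3ordercond} a second time. Omitting (iii) means your proof neither checks all the conditions the theorem requires nor correctly accounts for where the hypothesis \eref{3ordercond} enters. To complete the argument you must actually produce the full list of colored order-three trees for the three-component additive system \eref{aux-hamiltonian} (the combinatorial step you defer) and verify each family; the cascade-plus-bush dichotomy you propose does not exhaust them.
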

\begin{proof}
If $\omega_i=\tomega_i$ for all $i=1,..,s$, it follows that
 $ \alpha_{ij}=\tilde \alpha_{ij}$ and $ \beta_{ij}=\tilde \beta_{ij}$ for
all $i,j=1,..,s$ and moreover $\gamma_i=\tilde \gamma_i$, $\tilde \delta_i=\delta_i$, $d_i=\tilde d_i$
and $e_i=\tilde e_i$ for all $i=1,..,s$.
Now, by \eref{3ordercond} we have
\begin{equation}
 \sum_{i=1}^s \omega_i\,\gamma_i^2=
\sum_{i=1}^s \,\frac{(\omega_i-d_i)^2}{\omega_i}=
\sum_{i=1}^s(\omega_i-2d_i+\frac{d_i^2}{\omega_i})=\frac{1}{3}
\end{equation}
and similarly,
\[
  \sum_{i=1}^s \omega_i\,\delta_i^2=  \frac{1}{3} \qquad \mbox{and}\qquad
\sum_{i=1}^s \omega_i\,\gamma_i\,\delta_i=  \frac{1}{3} \,.
\]
Furthermore,  the third order conditions for \eref{SRK-1} imply
\begin{eqnarray*}
 \sum_{i=1}^s \omega_i c_i\gamma_i
 =  \sum_{i=1}^s \omega_i c_i\left(\sum_{j=1}^s (\omega_j - \frac{\omega_j}{\omega_i}\, \tilde a_{ji})\right)
=\frac{1}{2} -  \sum_{i=1}^s\sum_{j=1}^s\omega_j\,\tilde a_{ji}\,c_i =\frac{1}{3}
\end{eqnarray*}
and similarly
\[
\sum_{i=1}^s \omega_i \tilde c_i\gamma_i =\frac{1}{3}, \qquad
\sum_{i=1}^s \omega_i \tilde c_i\delta_i =\frac{1}{3} \qquad \mbox{and}\qquad
\sum_{i=1}^s \omega_i  c_i \delta_i =\frac{1}{3}.
\]
Therefore, it also holds
\begin{eqnarray*}
 \sum_{i,j} \omega_i\beta_{ij} \gamma_j &=&
\sum_{i,j} \omega_i\,(\omega_j - \frac{\omega_j}{\omega_i}\, a_{ji})\,\gamma_j
= \frac{1}{2} - \sum_{j=1}^s \omega_j \gamma_j \sum_{i=1}^s a_{ji}
= \frac{1}{2} - \sum_{j=1}^s \omega_j \gamma_j c_j = \frac{1}{6}
\end{eqnarray*}
and
\[
 \sum_{i,j} \omega_i\beta_{ij} \delta_j =\frac{1}{6},\qquad
\sum_{i,j} \omega_i\alpha_{ij}  \gamma_j =\frac{1}{6}\qquad \mbox{and}\qquad
\sum_{i,j} \omega_i\alpha_{ij}\delta_j =\frac{1}{6}.
\]
For the corresponding coupling conditions we get
\begin{eqnarray*}
  \sum_{i,j} \omega_i\beta_{ij}c_j &=&
\sum_{i,j} \omega_i\,(\omega_j - \frac{\omega_j}{\omega_i}\, a_{ji})\,c_j
= \frac{1}{2} - \sum_{j=1}^s \omega_j c_j \sum_{i=1}^s a_{ji}
= \frac{1}{2} - \sum_{j=1}^s \omega_j c_j^2 =\frac{1}{6}
\end{eqnarray*}
and
\[
 \sum_{i,j} \omega_i\beta_{ij} \tilde c_j =\frac{1}{6},\qquad
\sum_{i,j} \omega_i\alpha_{ij}  c_j =\frac{1}{6}\qquad  \mbox{and}\qquad
\sum_{i,j} \omega_i\alpha_{ij} \tilde c_j =\frac{1}{6}.
\]
Finally the second set of associated coupling conditions follows by the fact that
\[
 \gamma_i=1-\sum_{j=1}^s \frac{\omega_j \tilde a_{ji}}{\omega_i}= 1-\frac{d_i}{\omega_i}
\qquad\mbox{and}\qquad
 \delta_i=1-\sum_{j=1}^s \frac{\omega_j a_{ji}}{\omega_i}= 1-\frac{e_i}{\omega_i}
\]
such that
\[
 \sum_{i,j} \omega_ia_{ij}\gamma_j= \sum_{i,j} \omega_ia_{ij}\left(1-\frac{d_j}{\omega_j}\right)
= \frac{1}{2}-\sum_{j=1}^s \frac{d_j}{\omega_j}\sum_{i=1}^s \omega_ia_{ij}=
 \frac{1}{2}-\sum_{j=1}^s \frac{d_j\,e_j}{\omega_j}=\frac{1}{6}.
\]
In analogous way we obtain
\[
 \sum_{i,j} \omega_i a_{ij} \delta_j =\frac{1}{6},\qquad
\sum_{i,j} \omega_i\tilde a_{ij}  \gamma_j =\frac{1}{6}\qquad \mbox{and}\qquad
\sum_{i,j} \omega_i\tilde a_{ij}\delta_j =\frac{1}{6}.
\]
\end{proof}

\subsection{Symplecticity}

Under appropriate conditions the equation \eref{osOCPc} can be explicitly solved and
thus eliminated from the optimality system \eref{osOCPc}:
  Assume that locally in the neighbourhood of a critical point $u \mapsto H_{uu}(y,u,p)$
 is invertible along the trajectory, then by the implicit function theorem, we  deduce the existence of
 a function $u=\varphi(y,p)$ that such that \eref{osOCPc} is satisfied
\cite{BonnansLaurent-Varin06,CHV08,Kaya10}.
Using the function $\varphi(y,p)$ the associated reduced Hamiltonian system is then
\begin{subequations}\label{hamiltonian}
\begin{eqnarray}
 \dot y&=& ~ ~ \,\mathcal H_p(y,p)= f(y,\varphi(y,p)) + g(y,\varphi(y,p))\label{hamiltonian-a}\\
 \dot p&=& - \mathcal H_y(y,p)= -f_y(y,\varphi(y,p))^T p -  g_y(y,\varphi(y,p))^T p,\label{hamiltonian-b}
\end{eqnarray}
\end{subequations}
where $\mathcal H(y,p):=H(y,\varphi(y,p),p)$. This system is a Hamiltionian
differential equation \cite{HNW93,CHV08}. It has been shown \cite{HNW93}, that
in general integration methods that preserve the geometric properties such as symplecticity are
more suitable to solve Hamiltionian systems \cite{CHV08}. However, for optimal control problems, the
advantage of symplectic integrators is not as clear, but there are  cases where they provide a
significant computational advantage \cite{CHV08}.


Consider the discrete Hamiltionian system
\begin{equation}\label{hamiltoniansys}
 \dot y_i=-H_{p_i}(y,p), \qquad \qquad \dot p_i=H_{y_i}(y,p) \qquad i=1,..,K
\end{equation}
It is known that the flow $\psi_t$ generated in the phase space $\real^K\times \real^K$ of
$(y,p)$ by the equations \eref{hamiltoniansys} is  \textsl{symplectic}, i.e.
it preserves the the differential 2-form
$
 \omega^2=\sum_{i=1}^K dy_i\wedge dp_i.
$
Here,  the differentials  $dy_i$ and $dp_i$ at the stage $i$ are computed as
derivatives of the flow with respect to the initial data. Preserving the differential 2--form is equivalent \cite{HNW93} to $\psi_t\ast \omega^2=\omega^2$ for the flow $\psi_t$. As an important consequence we have hat the flow is  volume and orientation preserving.  Numerical methods that preserve symplecticity are called symplectic. There exist a variety of papers on symplectic RK methods for example \cite{Jay96,SA91,SS88,CHV08}.

Theorem \ref{thm-symplectic} gives conditions such that  the discretization scheme \eref{SRK-1}
together with \eref{ARK-2} is symplectic for \eref{hamiltonian}. We assume that $u$ is given by
$u=\varphi(y,p)$ and \eref{Hu} is locally equivalent to $u_n^i=\tilde \varphi(Y^{(i)}_n,P^{(i)}_n,\tilde P^{(i)}_n)$.
Here,  $\tilde \varphi$ is the corresponding implicitly defined function that belongs to
\eref{aux-hamiltonian} and we identify $\tilde \varphi(y,p,\tilde p)=\varphi(y,p)$ for all $(p,\tilde p)$ with $p\equiv\tilde p$.

\begin{thm}\label{thm-symplectic}
Assuming that \eref{Hu} holds and provided that $\tilde \omega_i=\omega_i$ for all
$i=1,..,s$, then the discretization scheme \eref{SRK-1}
together with \eref{ARK-2} is a symplectic scheme for \eref{hamiltonian}.
\end{thm}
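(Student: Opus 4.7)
My plan is to verify symplecticity by a direct computation of the pullback of the canonical two-form $dy\wedge dp$ under the one-step map $(y_n,p_n)\mapsto(y_{n+1},p_{n+1})$. Under $\tomega_i=\omega_i$ the four coefficient families of Proposition \ref{ARK-reformulation} collapse pairwise: one reads off $\talpha_{ij}=\alpha_{ij}=:\hat\alpha_{ij}$ with $\hat\alpha_{ij}=\omega_j-(\omega_j/\omega_i)\ta_{ji}$ and $\tbeta_{ij}=\beta_{ij}=:\hat\beta_{ij}$ with $\hat\beta_{ij}=\omega_j-(\omega_j/\omega_i)a_{ji}$. The same hypothesis turns the control condition \eref{Hu} into the stagewise relation $H^{(i)}_u=0$ for the scalar $H^{(i)}:=(\tPi)^Tf^{(i)}+(\Pii)^Tg^{(i)}$, the discrete counterpart of the condition that produces $\varphi(y,p)$ in \eref{hamiltonian}.

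First I expand $dy_{n+1}\wedge dp_{n+1}-dy_n\wedge dp_n$ via the stage equations. In each wedge $d\xi_i\wedge dp_n$ with $d\xi_i:=df^{(i)}$ I substitute $dp_n=d\tPi+h\sum_j\hat\alpha_{ij}(d\tilde\eta_j+d\eta_j)$; in each $d\zeta_i\wedge dp_n$ with $d\zeta_i:=dg^{(i)}$ I substitute $dp_n=d\Pii+h\sum_j\hat\beta_{ij}(d\tilde\eta_j+d\eta_j)$; and in each $dy_n\wedge(d\tilde\eta_i+d\eta_i)$, where $d\tilde\eta_i:=d((f_y^{(i)})^T\tPi)$ and $d\eta_i:=d((g_y^{(i)})^T\Pii)$, I substitute $dy_n=dY^{(i)}-h\sum_j(\ta_{ij}d\xi_j+a_{ij}d\zeta_j)$. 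At order $h^2$ the four cross-wedge coefficients take (after the necessary index swap) one of the two forms $\omega_i\hat\alpha_{ij}+\omega_j\ta_{ji}-\omega_i\omega_j$ or $\omega_i\hat\beta_{ij}+\omega_j a_{ji}-\omega_i\omega_j$, and both vanish identically by the explicit formulas for $\hat\alpha,\hat\beta$. What survives is the stagewise $O(h)$ residual $h\sum_i\omega_i T_i$ with
\[
T_i=d\xi_i\wedge d\tPi+d\zeta_i\wedge d\Pii-dY^{(i)}\wedge d\tilde\eta_i-dY^{(i)}\wedge d\eta_i.
\]

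To show $T_i=0$ I expand each differential. The matrix-transpose identity $f_y\,dY\wedge d\tilde P=dY\wedge f_y^Td\tilde P$ and its analogue for $g_y$ cancel the $f_y$- and $g_y$-contributions, while the symmetry of the Hessians $f_{yy},g_{yy}$ in their $(y,y)$-indices annihilates the terms $dY^{(i)}\wedge(f_{yy}dY^{(i)})^T\tPi$ and $dY^{(i)}\wedge(g_{yy}dY^{(i)})^T\Pii$. Using Clairaut's theorem to identify $(f_{yu}du)^T\tPi+(g_{yu}du)^T\Pii=(H^{(i)}_{uy})^Tdu^{(i)}$, the residual reduces to $T_i=f_u^{(i)}du^{(i)}\wedge d\tPi+g_u^{(i)}du^{(i)}\wedge d\Pii-dY^{(i)}\wedge(H^{(i)}_{uy})^Tdu^{(i)}$. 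The final step is to differentiate $H^{(i)}_u=0$, giving
\[
H^{(i)}_{uy}dY^{(i)}+H^{(i)}_{uu}du^{(i)}+(f_u^{(i)})^Td\tPi+(g_u^{(i)})^Td\Pii=0;
\]
wedging this identity with $du^{(i)}$ annihilates the symmetric term $H^{(i)}_{uu}du\wedge du$, and a second application of the transpose identity yields $T_i=0$, hence $dy_{n+1}\wedge dp_{n+1}=dy_n\wedge dp_n$.

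The main obstacle is the combinatorial bookkeeping imposed by the two distinct intermediate adjoints $\tPi$ and $\Pii$: each must be paired with exactly the right half of the additive splitting in every wedge expansion, and only under the hypothesis $\tomega=\omega$ do the four $O(h^2)$ coefficients collapse to the cancelling forms above. The genuinely non-routine step is the final stagewise cancellation in $T_i$, where the differentiated control condition plays at the discrete level the same role that the envelope identities $\mathcal H_p=H_p$, $\mathcal H_y=H_y$ play in the reduction of the continuous system to \eref{hamiltonian}.
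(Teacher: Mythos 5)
Your proposal is correct and follows essentially the same route as the paper: a direct verification that the canonical two-form is preserved, with the $O(h^2)$ cross-wedge coefficients vanishing identically by the formulas for $\talpha,\alpha,\tbeta,\beta$ (the paper's matrices $M^1,\dots,M^4$) and the surviving $O(h)$ stage terms annihilated by wedge antisymmetry against the symmetric Hessians. The only difference is that you treat the control differentials explicitly by wedging the differentiated stationarity condition $H^{(i)}_u=0$ with $du^{(i)}$, whereas the paper absorbs $u^i_n=\tilde\varphi(Y^{(i)}_n,P^{(i)}_n,\tilde P^{(i)}_n)$ into the stage variables and suppresses those terms; your version makes that step more transparent but is the same argument.
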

The proof of this theorem follows along the lines of the proof of Theorem 16.6 in \cite{HNW93}.
\begin{proof}
Consider the discretization scheme \eref{SRK-1}
together with \eref{ARK-2} as a discretization of \eref{hamiltonian}. Moreover, to simplify the notation
replace the time indices $n$ and $n+1$ by $0$ and $1$, respectively and let $K$ be the dimension
of $y_0$ or $p_0$, respectively. In order to prove the symplecticity of our discretization scheme,
we need to show that
\[
 \sum_{r=1}^K dy_1^r\wedge dp_1^r= \sum_{r=1}^K dy_0^r\wedge dp_0^r.
\]
First we  simplify each summand $dy_1^r\wedge dp_1^r-dy_0^r\wedge dp_0^r$ independently.
Using \eref{A1-symplectic} (see appendix), we  replace $dy_1^r$ and $dp_1^r$ and
obtain
\begin{subequations}\label{e1-symplectic}
\begin{eqnarray}
dy_1^r\wedge dp_1^r-dy_0^r\wedge dp_0^r&=&
-h\sum_{i=1}^s\tilde \omega_i (dy_0^r\wedge dQ_i^r) - h\sum_{i=1}^s \omega_i (dy_0^r\wedge dV_i^r)
\label{e1-symplectic-1}\\
 &&+h\sum_{i=1}^s\tilde \omega_i (dT_i^r\wedge dp_0^r) + h\sum_{i=1}^s \omega_i (dL_i^r\wedge dp_0^r)
\label{e1-symplectic-2}\\
&&-h^2\sum_{i,j}\tilde \omega_i\tilde \omega_j (dT_i^r\wedge dQ_j^r)
- h^2\sum_{i,j} \tilde \omega_i\omega_j (dT_i^r\wedge dV_j^r)
\label{e1-symplectic-3}\\
&&-h^2\sum_{i,j}\omega_i\tilde \omega_j (dL_i^r\wedge dQ_j^r)
- h^2\sum_{i,j}  \omega_i\omega_j (dL_i^r\wedge dV_j^r).
\label{e1-symplectic-4}
\end{eqnarray}
\end{subequations}
Next we replace $dy_0^r$ and $dp_0^r$ by terms resulting from \eref{A1-symplectic}
 and insert the corresponding terms into \eref{e1-symplectic}, then the rhs of
\eref{e1-symplectic} simplifies to
\begin{subequations}\label{e2-symplectic}
\begin{align}
dy_1^r\wedge dp_1^r-dy_0^r\wedge dp_0^r =   -h\sum_{i=1}^s\tilde \omega_i (dY_i^r\wedge dQ_i^r)
-h\sum_{i=1}^s\tilde \omega_i (dY_i^r\wedge dV_i^r)\\
 +h\sum_{i=1}^s\tilde \omega_i (dT_i^r\wedge d\tilde P_i^r)+
h\sum_{i=1}^s\tilde \omega_i (dL_i^r\wedge dP_i^r)
 -h^2\sum_{i,j}M^1_{ij} (dT_i^r\wedge dQ_j^r) \\
 -h^2\sum_{i,j}M^2_{ij} (dL_i^r\wedge dQ_j^r)
-h^2\sum_{i,j}M^3_{ij} (dT_i^r\wedge dV_j^r)
 -h^2\sum_{i,j}M^4_{ij} (dL_i^r\wedge dV_j^r)
\end{align}
\end{subequations}
Here, the entries of the matrices $M^1,M^2, M^3$ and $M^4$ are
\begin{align*}
 M^1_{ij}&= \tilde \omega_i\tilde\omega_j - \tilde \omega_j\tilde a_{ji} - \tilde \omega_i \tilde \alpha_{ij} \qquad \qquad
M^2_{ij}= \omega_i\tilde\omega_j - \tilde \omega_j a_{ji} - \omega_i \tilde \beta_{ij}\\[0.5em]
M^3_{ij}&= \tilde \omega_i\omega_j -  \omega_j\tilde a_{ji} - \tilde \omega_i  \alpha_{ij} \qquad \qquad
M^4_{ij}=  \omega_i\omega_j - \omega_j a_{ji} - \omega_i  \beta_{ij}.
\end{align*}
Hence, using  the coefficients $\tilde \alpha_{ij}$, $\alpha_{ij}$, $\tilde \beta_{ij}$ and
$\beta_{ij}$ all entries of $M^1,M^2, M^3$ and $M^4$ vanish. Using
 $\tilde  \omega_i=\omega_i$ for all $i=1,..,s$
we compute
\begin{align*}
\sum_{r=1}^K( dy_1^r\wedge dp_1^r- dy_0^r\wedge dp_0^r) =&
   h\left[-\sum_{i=1}^s\omega_i \sum_{r=1}^K  (dY_i^r\wedge (dQ_i^r +  dV_i^r))\right. \\
 &\qquad \left.+\sum_{i=1}^s\omega_i\sum_{r=1}^K (dT_i^r\wedge d\tilde P_i^r)+
(dL_i^r\wedge dP_i^r)\right].
\end{align*}
By \eref{A2-symplectic} we then obtain for the rhs of this equation
 \begin{align*}
-\; h\;\sum_{i=1}^s\omega_i \;\left[ \sum_{\ell,r=1}^K
\left(\sum_j\frac{\partial}{\partial y^r\partial y^\ell} f^j(Y^{(i)},u_i)  \tilde P_i^j +
\sum_j\frac{\partial}{\partial y^r\partial y^\ell}g^j(Y^{(i)},u_i)  P_i^j \right)
(dY^r_i\wedge dY^\ell_i)\right.\\[0.5em]
\left.\hspace{8em} + \left(\frac{\partial}{\partial y^r} f^\ell(Y^{(i)},u_i) (dY^r_i\wedge d\tilde P_i^\ell)
+ \frac{\partial}{\partial y^r} g^\ell(Y^{(i)},u_i) (dY^r_i\wedge dP_i^\ell)\right)\right]\\
+\;  h\;\sum_{i=1}^s\omega_i \;\left[ \sum_{\ell,r=1}^K
 \left(\frac{\partial}{\partial y^\ell} f^r(Y^{(i)},u_i) (dY^\ell_i\wedge d\tilde P_i^r)
+ \frac{\partial}{\partial y^\ell} g^r(Y^{(i)},u_i) \right)(dY^\ell_i\wedge dP_i^r)\right].
\end{align*}
Since
$ dY_i^r\wedge dY_i^\ell=-dY_i^\ell\wedge dY_i^r$ and $dY_i^r\wedge dY_i^r=0$
holds for the exterior product (cf. \cite{HNW93}),  the previous term vanishes and thus
\[
 \sum_{r=1}^K [(dy_1^r\wedge dp_1^r) -  (dy_0^r\wedge dp_0^r)]=0.
\]
\hfill\end{proof}

\section{Numerical example and implementation}
We consider the following problem taken from Hager \cite{Hager00}
\begin{align}
\min & \; \frac{1}2 \int_0^1 ( u^2 + 2 x^2 ) dt \quad \mbox{ subject to } \\
        &  \dot x(t) = \frac{1}2 x(t) + u(t), \; x(0) = 1.  \label{original-system}
\end{align}
The optimal solution is denoted by $(u^*,x^*)$ where
\begin{equation}
\label{opt-control-num}
u^*(t) = \frac{  2 ( \exp(3t)-\exp(3) ) }{ \exp( 3t / 2 ) ( 2 + \exp(3) ) }.
\end{equation}
To illustrate the numerical methods we reformulate the problem  as a singularly perturbed differential equation
\begin{subequations}\label{num-opt-pb}
\begin{align}
\min & \; c(1) \quad \mbox{subject to }\\
        & \dot c(t) = \frac{1}2  ( u^2(t) +  x^2(t) + 4 z^2(t)  ), \; c(0) = 0 \\
        & \dot x(t) = z(t) + u(t),  \; x(0)=1\\
        & \dot z(t) = \frac{1}\epsilon ( \frac{1}2 x(t) - z(t) ), \; z(0)=\frac{1}2
\end{align}
\end{subequations}
for some $\epsilon>0.$ Note that, as $\epsilon \to 0$, in (\ref{num-opt-pb}d)
we get $z(t)=x(t)/2$ and thus substituting into
(\ref{num-opt-pb}c) we recover system (\ref{original-system}). Taking
$$y=\begin{pmatrix}c \\ x \\ z \end{pmatrix}, \;
f(y,u)= \begin{pmatrix}\displaystyle\frac{1}2(u^2+x^2+4z^2) \\  z+ u  \\  0 \end{pmatrix}, \;
g(y) = \frac{1}{\epsilon }\begin{pmatrix}Ê0 \\ 0\\ \displaystyle\frac{1}2x -z  \end{pmatrix}, $$
we obtain  a system of
the form (\ref{OCP}) for a scalar valued control $u(t).$  We discretize
the system IMEX methods. We report on results for the second--order L--stable IMEX SSP2
scheme (\ref{tab02}), the second--order globally stiffly accurate IMEX--GSA (\ref{table-gsa})
and the third--order IMEX--SA3 (\ref{3rd-order}) scheme. In all cases we consider
 an equidistant grid on $[0,1]$ with $n=1,\dots,N$ gridpoints. In the following
we set $u=(u_n)_{n=1}^N$ where $u_n=u(t_n)$ and similarly for $x$ and $c.$
  In order to numerically solve the optimality conditions for (\ref{num-opt-pb}) it remains
to solve  equations (\ref{SRK-1}),(\ref{ARK-2}) and (\ref{Hu}). We do not
present the detailed formulas but have some remarks concerning the implementation.

 In view of
the stiff source in $g$ it is advantegous to solve instead of (\ref{SRK-1})
the equivalent system (\ref{SRK-2}). As mentioned below Lemma \ref{Lem2} the
adjoint equation (\ref{ARK-2}) has to be solved backwards in time. It is therefore
advantegous to use the equivalent formulation (\ref{ARK-1}). Note that the discretized terminal
condition for the adjoint scheme is $p_N=(1,0,0).$
Furthermore, we assume for simplicity that
the control $u$ is the same on each stage. Then, equation (\ref{Hu})
reads
\begin{equation}\label{Hu-num}
\sum\limits_{k=1}^s \tilde \omega_k f_u(Y^{(k)}_n, u_n) \tilde P^{(k)}_n +
\omega_k g_u(Y^{(k)}_n, u_n)  P^{(k)}_n = 0.
\end{equation}
and
$$ \tilde P^{(i)}_n  h \tilde \omega_i = \tilde \xi_n^{(i)}, \;    P^{(i)}_n  h  \omega_i =  \xi_n^{(i)},$$
where $\tilde \xi$ and $\xi$ are the solution to (\ref{ARK-1}). The optimality system
 (\ref{SRK-1}),(\ref{ARK-2}), (\ref{Hu})
 is solved by a block Gauss--Seidel method:
Provided we know
$u$ on all grid points we solve (\ref{SRK-2}) to obtain the state $y$.
Having $u$ and $y$ at hand we can solve (\ref{ARK-1}) in order to
obtain $p$ and subsequently $\tilde \xi$ and $\xi.$
However, for an arbitrary $u$ equation (\ref{Hu-num}) will not hold true.
We therefore use a nonlinear root finding method $F(u)=0$.  We set
\begin{equation}\label{F}
F(u) = \left\| ( F_n( u ) )_{n=1}^N  \right\|^2_2, \; F_n(u) =  \sum\limits_{k=1}^s
 f_u(Y^{(k)}_n, u_n) \tilde \xi^{(k)}_n +
 g_u(Y^{(k)}_n, u_n)  \xi^{(k)}_n,
 \end{equation}
 where $Y^{(k)}_n, \tilde \xi^{(k)}_n$ and $\xi^{(k)}_n$ are all dependent on $u$ through
(\ref{SRK-2}) and (\ref{ARK-1}), respectively.
Since in the
example $g_y$ is independent of $y$ we can solve the adjoint equations (\ref{ARK-1})
more efficiently. We  use the fact that
$$Y^i_n = y(t_n + \tilde c_i \Delta t) + O( (\Delta t)^p ),$$
where $p$ is the order of the explicit part of the IMEX scheme . Hence, instead of computing
at every time step in the adjoint equation the values $Y^i_n$ for $i=1,\dots,s$
we interpolate the given data $(y_n)_n$ using a second--and third--order accurate
interpolation, respectively.
\par
We study the dependence of $F(u^*)$ on grid size and value of $\epsilon$ for the
control $u^*$ given by (\ref{opt-control-num}). This control is optimal only in the case
of $\epsilon=0$.    In order to observe the convergence
rates we compute a fine grid solution on $N=640$ grid points. The corresponding
states and adjoints are denoted by $(c^*,x^*,z^*)$ and $(p_1^*,p_2^*,p_3^*),$ respectively.
Note that due to the particular structure of the problem we always have $p_1^*(t)=1$ and therefore we
do not report this quantity in the tables below.  We denote by $F^* =(F_n(u^*))$ and we not necessarily
have  $\| F^* \|_\infty  =0$  since  the control $u^*$  is not optimal for the problem (\ref{num-opt-pb}) in case
 $\epsilon>0$. As in \cite{Hager00} we also report the $L^\infty-$error in the state $x(t)$,
and relaxation variable $z(t)$ obtained on a finer grid. Convergence results
for the IMEX--SA3 and IMEX--GSA are given in Table \ref{res-SA3} and Table \ref{res-GSA}, respectively.
  We observe that the convergence properties in the state remain independent on $\epsilon.$
 Since the IMEX--SA3  scheme is not stiffly accurate we loose third--order convergence in the relaxation
variable $z$ when  $\epsilon$ is underresolved. Contrary,
we observe second--order convergence also for small $\epsilon$ for
 the stiffly accurate IMEX--GSA scheme.

Further, we study the convergence behavior of the nonlinear root finding
method. Initially, we set $u(t)=1$ and subsequently solve $F(u)=0$
using a standard black--box root finding method of Matlab with termination
tolerance $1.e-08$ on $F.$ Even so $F( (u_n)_n )$ is zero up to machine precision in the optimization
procedure there remains a difference in the computed trajectory $(x_n)_n$
compared with $x^*.$  The results for IMEX--SSP2
are given in Table \ref{res-SSP2}. We observe that the $L^2-$difference between
analytical and numerically computed trajectory and control decreases with  grid
size.

\section{Summary and conclusions}

We investigated the application of IMEX Runge-Kutta methods to
optimal control problems. In particulare we focused on order
conditions and conditions for symplecticity. We studied the
adjoint equations and established a commutative diagram for
optimization and discretization. In particular cases previous
results for single Runge-Kutta schemes could be recovered
\cite{BonnansLaurent-Varin06, Hager00}. Examples of up to third
order IMEX Runge-Kutta methods are given and numerical results for
a sample problem have been presented.


\section*{Appendix}
\subsection*{A1. Addenda on the proof of Lemma \ref{Lem1}}\label{appendix-1}
The Lagrangian function of \eref{DOP1} is given by
\begin{eqnarray*}
 \mathcal L(y, Y,p,\zeta)&=& j(y_N)+p^0\cdot(y_0-y^0)\nonumber\\[0.3em]
&&\quad +\sum_{n=0}^{N-1}
\left[ p_{n+1} ^T\left(- y_{n+1}+ y_n +
h\sum_{i=1}^s \tomega_if(Y_n^{(i)},u_n^i) + h\sum_{i=1}^s \omega_ig(Y_n^{(i)},u_n^i)\right)\right.\\[0.5em]
&&\left.\qquad  + \sum_{i=1}^s (\zeta_n^{(i)})^T(-Y^{(i)}_n + y_n + h\sum_{j=1}^{s} \ta_{ij}f(Y_n^{(j)},u_n^j) + h\sum_{j=1}^s
a_{ij}g(Y_n^{(j)},u_n^j))\right]\,.\nonumber
\end{eqnarray*}

To simplify the notation in the proof of Theorem \ref{thm-symplectic}, the
time indices $n$ and $n+1$ are replaced by $0$ and $1$, respectively, for the discrete approximations
$y_n$ and $p_n$ and  they are ommitted for the intermediate states,
which are denoted by vectors $Y_i\in \real^K$, $P_i\in \real^K$ or $\tilde P_i\in \real^K$,
respectively, with $i$ being the index for the stages $1,..,s$.
In this notation the schemes \eref{SRK-1} and \eref{ARK-2} read for $ i=1,..,s$
 \begin{align*}
Y_i &=y_0 + h\sum_{j=1}^{s} \ta_{ij}f(Y_j,u_j) + h\sum_{j=1}^s
a_{ij}g(Y_j,u_j), \\
y_{1}&= y_0 + h\sum_{i=1}^s \tomega_if(Y_i,u_i) + h\sum_{i=1}^s \omega_ig(Y_i,u_i)
 \end{align*}
and for $i=1,\dots,s$
 \begin{eqnarray*}
\tilde P_i&  =&p_0 - h\sum_{j=1}^s \tilde \alpha_{ij}\,f_y(Y_j,u_j)^T \tilde P_j
- h\sum_{j=1}^s \alpha_{ij}\,g_y(Y_j,u_j)^T P_j \\
P_i&  =&p_0-h\sum_{j=1}^s \tilde \beta_{ij}\,f_y(Y_j,u_j)^T\tilde P_j
- h\sum_{j=1}^s \beta_{ij}\,g_y(Y_j,u_j)^T P_j  \\
p_{1}&=& p_{0} - h\sum_{i=1}^s \tomega_i \,f_y(Y_i,u_i)^T \tilde P_i
-\sum_{i=1}^s \omega_i \,g_y(Y_i,u_i)^T P_i.
 \end{eqnarray*}
The one-forms $dy_1^r:\real^{2K}\rightarrow \real$ and $dY_i^r:\real^{2K}\rightarrow \real$
 used in the proof of Theorem \ref{thm-symplectic}
are defined by
\[
 z\mapsto\frac{\partial y_1^r}{\partial (y_0,p_0)}z \qquad \mbox{and}
\qquad z\mapsto\frac{\partial Y_i^r}{\partial (y_0,p_0)}z ,
\]
respectively and similarly also $dp_1^r$, $dP_i^r$ and $d\tilde P_i^r$,
where $y_1^r$,  $p_1^r$, $Y^r_i$, $P^r_i$ and $\tilde P^r_i$,  denote the
$r$th component of the corresponding vector.
By differentiation of the above equations with respect to
$(y_0,p_0)$ and using the linearity of the differential we obtain
\begin{subequations}\label{A1-symplectic}
\begin{align}
 dY_i^r = dy_0^r + h \sum_{j=1}^s \tilde a_{ij} dT_j^r +h \sum_{j=1}^s  a_{ij} dL_j^r ,
 dy_1^r = dy_0^r + h \sum_{i=1}^s \tilde \omega_i dT_i^r +h \sum_{i=1}^s  \omega_i dL_i^r
\\
 d\tilde P_i^r = dp_0^r - h \sum_{j=1}^s \tilde \alpha_{ij} dQ_j^r -h \sum_{j=1}^s  \alpha_{ij} dV_j^r,
dP_i^r = dp_0^r - h \sum_{j=1}^s \tilde \beta_{ij} dQ_j^r - h \sum_{j=1}^s  \beta_{ij} dV_j^r
\\
dp_1^r = dp_0^r + h \sum_{i=1}^s \tilde \omega_i dQ_i^r +h \sum_{i=1}^s  \omega_i dV_i^r
\end{align}
\end{subequations}
for $i=1,..,s$ and $r=1,..,K$, where
\begin{subequations}\label{A2-symplectic}
\begin{align}
 dT_i^r = \sum_{\ell=1}^K \frac{\partial}{\partial y^\ell} f^r(Y_i,u_i) dY_i^\ell, \;
  dL_i^r =  \sum_{\ell=1}^K \frac{\partial}{\partial y^\ell} g^r(Y_i,u_i) dY_i^\ell, \;
\\
 dQ_i^r = \sum_{j=1}^K \sum_{\ell=1}^K\frac{\partial^2}{\partial y^\ell\partial y^r} f^j(Y_i,u_i)
\tilde P_i^j, \; dY_i^\ell+ \sum_{j=1}^K\frac{\partial}{\partial y^r} f^j(Y_i,u_i)
 d\tilde P_i^j, \; \\
dV_i^r = \sum_{j=1}^K \sum_{\ell=1}^K\frac{\partial^2}{\partial y^\ell\partial y^r} g^j(Y_i,u_i)
 P_i^j, \; dY_i^\ell+ \sum_{j=1}^K\frac{\partial}{\partial y^r} g^j(Y_i,u_i)
dP_i^j.
\end{align}
\end{subequations}

\subsection*{A2. Examples of IMEX schemes}\label{appendix-2}
We use the following convention for the names of the schemes: Name$(k,\sigma_E,\sigma_I)$ where $k$ is the order, $\sigma_E$ the number of levels in the explicit scheme and $\sigma_I$ the number of levels in the implicit scheme.

A two stage second order IMEX method, where the implicit part is L-stable, is
given by Pareschi and Russo in \cite{PR03}.
\begin{table}[!h]
  \begin{center}
 \begin{tabular}{ccc}
    $(\tilde A,\tilde \omega)$:\hspace*{0.2cm}\begin{tabular}{c|cc}
      $0$ & $0$ & $0$\\
      $1$ & $1$ & $0$\\
      \hline
      & $1/2$ & $1/2$
    \end{tabular}
    & \hspace*{0.025\textwidth} &
    $(A,\omega)$:\hspace*{0.2cm}\begin{tabular}{c|cc}
      $\gamma$ & $\gamma$ & $0$\\
      $1-\gamma$ & $1-2\gamma$ & $\gamma$\\
      \hline
      & $1/2$ & $1/2$
    \end{tabular}
  \end{tabular}
\end{center}
  \caption{ IMEX--SSP$2(2,2,2)$ is a second--order IMEX scheme.
The factor $\gamma$ is given by $\gamma=1-1/{\sqrt{2}}$.}
\label{tab02}
\end{table}
Since $\tomega=\omega$ the second order conditions for the
additive RK scheme are directly satisfied.  To avoid loss of accuracy in stiff problems, in order to compute a globally stiffly accurate method ($\ta_{sj}=\tomega_j$ and $a_{sj}=\omega_j$, $j=1,\ldots,s$ see \cite{BPR11}), we are forced to take $\tomega\neq\omega$, $\tomega_{\nu}=0$ and impose the additional second order conditions (\ref{2ordercond}). In this case at least $4$ levels are required. An example is IMEX-GSA$(2,3,4)$ reported below

\begin{table}[!h]
  \begin{center}
 \begin{tabular}{ccc}
    $(\tilde A,\tilde \omega)$:\hspace*{0.2cm}\begin{tabular}{c|cccc}
      $0$ & $0$ & $0$&$0$&$0$\\
     $3/2$ & $3/2$ & $0$&$0$&$0$\\
    $1/2$ & $5/6$ & $-1/3$&$0$&$0$\\
    $1$ & $1/3$&$1/6$ & $1/2$&$0$\\
      \hline
      & $1/3$&$1/6$ & $1/2$&$0$
    \end{tabular}
    & \hspace*{0.025\textwidth} &
    $(A,\omega)$:\hspace*{0.2cm}\begin{tabular}{c|cccc}
     $1/2$ & $1/2$ & $0$&$0$&$0$\\
      $5/4$ & ${3}/{4}$ & ${1}/{2}$&$0$&$0$\\
       $1/4$ & $-1/4$ & $0$&$1/2$&$0$\\
       $1$&$1/6$&$-1/6$&$1/2$&$1/2$\\
      \hline
       & $1/6$&$-1/6$&$1/2$&$1/2$
    \end{tabular}
  \end{tabular}
\end{center}
  \caption{IMEX--GSA$(2,3,4)$ is a second order globally stiffly accurate scheme.}
  \label{table-gsa}
\end{table}

Moreover, it can be shown
that for $\tomega=\omega$ no three stages IMEX--RK that satisfies
the additional third-order conditions in Theorem \ref{thm-3order} with
A-stable implicit integrator exist. Here, we report
three stage third order IMEX--RK method such that the explicit scheme
corresponds to the
third-order scheme given in \cite{Hager00}.
\begin{table}[!h]
  \begin{center}
 \begin{tabular}{ccc}
    $(\tilde A,\tilde \omega)$:\hspace*{0.2cm}\begin{tabular}{c|ccc}
      $0$ & $0$ & $0$&$0$\\
     $1/2$ & $1/2$ & $0$&$0$\\
    $1$ & $-1$ & $2$&$0$\\
      \hline
      & $1/6$ & $2/3$&$1/6$
    \end{tabular}
    & \hspace*{0.025\textwidth} &
    $(A,\omega)$:\hspace*{0.2cm}\begin{tabular}{c|ccc}
     $0$ & $0$ & $0$&$0$\\
      $1/2$ & ${1}/{4}$ & ${1}/{4}$&$0$\\
       $1$ & $0$ & $1$&$0$\\
      \hline
       & $1/6$ & $2/3$&$1/6$
    \end{tabular}
  \end{tabular}
\end{center}
  \caption{ IMEX--HAG$(3,3,3)$ is a third--order IMEX scheme, where
$(\tilde A,\tilde \omega)$ corresponds to the third-order scheme given by Hager in \cite{Hager00}.}
\end{table}

Finally we present a third order scheme which uses $4$ levels in order to achieve better stability properties in the implicit integrator.
\begin{table}[!h]
  \begin{center}
 \begin{tabular}{ccc}
    $(\tilde A,\tilde \omega)$:\hspace*{0.2cm}\begin{tabular}{c|cccc}
      $0$ & $0$ & $0$&$0$&$0$\\
     $2/3$ & $2/3$ & $0$&$0$&$0$\\
    $1$ & $3/4$ & $1/4$&$0$&$0$\\
    $1$ & $1/4$&$3/4$ & $0$&$0$\\
      \hline
      & $1/4$&$3/4$ & $-1/2$&$1/2$
    \end{tabular}
    & \hspace*{0.025\textwidth} &
    $(A,\omega)$:\hspace*{0.2cm}\begin{tabular}{c|cccc}
     $0$ & $0$ & $0$&$0$&$0$\\
      $2/3$ & $-{1}/{3}$ & ${1}$&$0$&$0$\\
       $1$ & $-1/4$ & $1/4$&$1$&$0$\\
       $1$&$1/4$&$3/4$&$-1/2$&$1/2$\\
      \hline
       & $1/4$&$3/4$&$-1/2$&$1/2$
    \end{tabular}
  \end{tabular}
\end{center}
  \caption{IMEX--SA$(3,4,4)$ is a four stages, third order IMEX scheme.}
  \label{3rd-order}
\end{table}

Note that all IMEX schemes of type ARS have $\omega_1=0$,
such that they do not satisfy the condition $\tomega\neq0$ and $\omega\neq0$.
However if they are of the particular structure of \cite{ARS97} with $\omega_j\neq 0$ for $j\neq1$
and $\tomega\neq 0$, then we can still find a variable transformation such that the
conclusion of Proposition \ref{ARK-reformulation} holds (see Remark in Section \ref{sec2}).


\begin{sidewaystable}[h]
 \begin{center}
 \begin{tabular}{|c|c|c|c|c|c|c|}
  \hline
     $\epsilon$ & $N$ & $\|F_n(u^*) - F_n(u_n)\|_\infty$
     &  $ \| x^*- (x_{n})_n \|_\infty$  (Ratio) &  $ \| z^*- (z_{n})_n \|_\infty$      (Ratio)
     &  $ \| p_2^*- (p_{2,n})_n \|_\infty$  (Ratio) &  $ \| p_3^*- (p_{3,n})_n \|_\infty$      (Ratio)   \\
\hline
 1e+01 & 10 &  9.1756e-02 (0.00) & 3.1890e-05  (0.00) & 2.0043e-06 (0.00) & 1.2265e-05 (0.00) & 3.4885e-05 (0.00) \\
 1e+01 & 20 &  4.5186e-02 (2.03) & 3.2932e-06  (9.68) & 2.0679e-07 (9.69) & 1.3011e-06 (9.43) & 3.6568e-06 (9.54) \\
 1e+01 & 40 &  2.1851e-02 (2.07) & 3.7927e-07  (8.68) & 2.3698e-08 (8.73) & 1.4989e-07 (8.68) & 4.1983e-07 (8.71) \\
 1e+01 & 80 &  1.0192e-02 (2.14) & 4.5576e-08  (8.32) & 2.8387e-09 (8.35) & 1.8000e-08 (8.33) & 5.0267e-08 (8.35) \\
 1e+01 & 160 &  4.3665e-03 (2.33) & 5.5187e-09  (8.26) & 3.4314e-10 (8.27) & 2.1781e-09 (8.26) & 6.0731e-09 (8.28) \\
 1e+01 & 320 &  1.4553e-03 (3.00) & 6.0801e-10  (9.08) & 3.7774e-11 (9.08) & 2.3984e-10 (9.08) & 6.6831e-10 (9.09) \\

 \hline
 1e+00 & 10 &  6.0739e-02 (0.00) & 7.1856e-05  (0.00) & 1.1252e-05 (0.00) & 1.8045e-04 (0.00) & 3.6940e-04 (0.00) \\
 1e+00 & 20 &  2.9807e-02 (2.04) & 7.5380e-06  (9.53) & 1.3389e-06 (8.40) & 2.0218e-05 (8.93) & 4.2999e-05 (8.59) \\
 1e+00 & 40 &  1.4394e-02 (2.07) & 8.6958e-07  (8.67) & 1.6324e-07 (8.20) & 2.3966e-06 (8.44) & 5.1837e-06 (8.30) \\
 1e+00 & 80 &  6.7095e-03 (2.15) & 1.0444e-07  (8.33) & 2.0088e-08 (8.13) & 2.9141e-07 (8.22) & 6.3527e-07 (8.16) \\
 1e+00 & 160 &  2.8737e-03 (2.33) & 1.2639e-08  (8.26) & 2.4586e-09 (8.17) & 3.5473e-08 (8.22) & 7.7621e-08 (8.18) \\
 1e+00 & 320 &  9.5761e-04 (3.00) & 1.3919e-09  (9.08) & 2.7217e-10 (9.03) & 3.9175e-09 (9.05) & 8.5874e-09 (9.04) \\

 \hline
 1e-01 & 10 &  1.2860e-02 (0.00) & 1.8886e-04  (0.00) & 1.5712e-03 (0.00) & 2.4410e-03 (0.00) & 5.2256e-03 (0.00) \\
 1e-01 & 20 &  6.2772e-03 (2.05) & 2.6021e-05  (7.26) & 2.2140e-04 (7.10) & 3.8175e-04 (6.39) & 8.2394e-04 (6.34) \\
 1e-01 & 40 &  3.0272e-03 (2.07) & 3.6994e-06  (7.03) & 3.2117e-05 (6.89) & 5.6588e-05 (6.75) & 1.1973e-04 (6.88) \\
 1e-01 & 80 &  1.4106e-03 (2.15) & 5.0436e-07  (7.33) & 4.4293e-06 (7.25) & 7.8629e-06 (7.20) & 1.6467e-05 (7.27) \\
 1e-01 & 160 &  6.0415e-04 (2.33) & 6.5544e-08  (7.69) & 5.7612e-07 (7.69) & 1.0298e-06 (7.64) & 2.1441e-06 (7.68) \\
 1e-01 & 320 &  2.0132e-04 (3.00) & 7.4775e-09  (8.77) & 6.5816e-08 (8.75) & 1.1802e-07 (8.73) & 2.4499e-07 (8.75) \\

 \hline
 1e-04 & 10 &  7.1929e-05 (0.00) & 8.6575e-05  (0.00) & 8.7328e-05 (0.00) & 6.3322e-03 (0.00) & 1.2066e-02 (0.00) \\
 1e-04 & 20 &  1.0419e-05 (6.90) & 9.0398e-06  (9.58) & 4.3047e-05 (2.03) & 1.3610e-03 (4.65) & 2.6616e-03 (4.53) \\
 1e-04 & 40 &  3.7855e-06 (2.75) & 1.0419e-06  (8.68) & 4.8027e-05 (0.90) & 3.1411e-04 (4.33) & 6.2276e-04 (4.27) \\
 1e-04 & 80 &  1.6983e-06 (2.23) & 1.2524e-07  (8.32) & 5.0930e-05 (0.94) & 7.7166e-05 (4.07) & 1.5282e-04 (4.08) \\
 1e-04 & 160 &  7.2403e-07 (2.35) & 1.5191e-08  (8.24) & 5.8513e-05 (0.87) & 6.1901e-05 (1.25) & 1.2381e-04 (1.23) \\
 1e-04 & 320 &  2.4114e-07 (3.00) & 7.1227e-09  (2.13) & 7.0310e-05 (0.83) & 7.0572e-05 (0.88) & 1.4115e-04 (0.88) \\

 \hline
\end{tabular}
\end{center}
  \caption{ $u^*$ is given by equation (\ref{opt-control-num}) and computed on a fine grid of $N=640.$
   The corrresponding
  state are $(c^*,x^*,z^*)$. The state $(c_n,x_n,z_n)_n$ is the discrete solution to (\ref{SRK-1}) using  $u_n$ given by equation (\ref{opt-control-num}) on the corresponding grid $N$. The values of $F$ for $u^*$ and $u_n$ are given by  (\ref{F}) on the respective grids where the adjoint states are obtained through (\ref{ARK-2})
  using IMEX--SA3. The value in the brackets is the residual of the current
  divided by the residual of the previous result.  }
\label{res-SA3}
\end{sidewaystable}

\begin{sidewaystable}[h]
 \begin{center}
 \begin{tabular}{|c|c|c|c|c|c|c|}
  \hline
     $\epsilon$ & $N$ & $\|F_n(u^*) - F_n(u_n)\|_\infty$
     &  $ \| x^*- (x_{n})_n \|_\infty$  (Ratio) &  $ \| z^*- (z_{n})_n \|_\infty$      (Ratio)
     &  $ \| p_2^*- (p_{2,n})_n \|_\infty$  (Ratio) &  $ \| p_3^*- (p_{3,n})_n \|_\infty$      (Ratio)   \\
\hline
 1e+01 & 10 &  9.2301e-02 (0.00) & 1.7243e-03  (0.00) & 2.1503e-04 (0.00) & 5.1541e-04 (0.00) & 6.5622e-04 (0.00) \\
 1e+01 & 20 &  4.5250e-02 (2.04) & 3.9831e-04  (4.33) & 4.9394e-05 (4.35) & 1.1331e-04 (4.55) & 1.6797e-04 (3.91) \\
 1e+01 & 40 &  2.1859e-02 (2.07) & 9.5630e-05  (4.17) & 1.1824e-05 (4.18) & 2.6590e-05 (4.26) & 4.2111e-05 (3.99) \\
 1e+01 & 80 &  1.0193e-02 (2.14) & 2.3345e-05  (4.10) & 2.8640e-06 (4.13) & 6.3807e-06 (4.17) & 1.0420e-05 (4.04) \\
 1e+01 & 160 &  4.3667e-03 (2.33) & 5.6848e-06  (4.11) & 6.7536e-07 (4.24) & 1.4979e-06 (4.26) & 2.4821e-06 (4.20) \\
 1e+01 & 320 &  1.4553e-03 (3.00) & 1.3305e-06  (4.27) & 1.3445e-07 (5.02) & 2.9758e-07 (5.03) & 4.9649e-07 (5.00) \\

 \hline
 1e+00 & 10 &  6.1450e-02 (0.00) & 2.5447e-03  (0.00) & 1.3985e-03 (0.00) & 1.6246e-03 (0.00) & 4.7224e-04 (0.00) \\
 1e+00 & 20 &  2.9890e-02 (2.06) & 5.7481e-04  (4.43) & 3.2585e-04 (4.29) & 3.8663e-04 (4.20) & 1.1769e-04 (4.01) \\
 1e+00 & 40 &  1.4404e-02 (2.08) & 1.3654e-04  (4.21) & 7.8532e-05 (4.15) & 9.4343e-05 (4.10) & 2.9295e-05 (4.02) \\
 1e+00 & 80 &  6.7107e-03 (2.15) & 3.2950e-05  (4.14) & 1.9085e-05 (4.11) & 2.3378e-05 (4.04) & 7.2297e-06 (4.05) \\
 1e+00 & 160 &  2.8739e-03 (2.34) & 7.7608e-06  (4.25) & 4.5075e-06 (4.23) & 5.8078e-06 (4.03) & 1.7202e-06 (4.20) \\
 1e+00 & 320 &  9.5763e-04 (3.00) & 1.6021e-06  (4.84) & 8.9805e-07 (5.02) & 1.3160e-06 (4.41) & 3.4393e-07 (5.00) \\

 \hline
 1e-01 & 10 &  1.4564e-02 (0.00) & 3.5004e-03  (0.00) & 2.8345e-03 (0.00) & 1.4019e-02 (0.00) & 2.0384e-02 (0.00) \\
 1e-01 & 20 &  6.5111e-03 (2.24) & 5.4949e-04  (6.37) & 6.2277e-04 (4.55) & 4.1263e-03 (3.40) & 6.7796e-03 (3.01) \\
 1e-01 & 40 &  3.0602e-03 (2.13) & 8.5496e-05  (6.43) & 1.4232e-04 (4.38) & 1.1543e-03 (3.57) & 1.9908e-03 (3.41) \\
 1e-01 & 80 &  1.4151e-03 (2.16) & 1.3647e-05  (6.26) & 3.6618e-05 (3.89) & 3.0554e-04 (3.78) & 5.3793e-04 (3.70) \\
 1e-01 & 160 &  6.0473e-04 (2.34) & 2.7495e-06  (4.96) & 9.2319e-06 (3.97) & 7.5515e-05 (4.05) & 1.3418e-04 (4.01) \\
 1e-01 & 320 &  2.0138e-04 (3.00) & 7.8573e-07  (3.50) & 1.8843e-06 (4.90) & 1.5388e-05 (4.91) & 2.7456e-05 (4.89) \\

 \hline
 1e-04 & 10 &  2.6263e-03 (0.00) & 7.1375e-03  (0.00) & 3.5731e-03 (0.00) & 9.8943e-03 (0.00) & 9.5557e-05 (0.00) \\
 1e-04 & 20 &  3.0503e-04 (8.61) & 1.6575e-03  (4.31) & 8.3367e-04 (4.29) & 2.3649e-03 (4.18) & 1.1131e-04 (0.86) \\
 1e-04 & 40 &  3.8390e-05 (7.95) & 3.9915e-04  (4.15) & 2.0443e-04 (4.08) & 6.0419e-04 (3.91) & 1.1350e-04 (0.98) \\
 1e-04 & 80 &  5.5770e-06 (6.88) & 9.7706e-05  (4.09) & 5.3299e-05 (3.84) & 1.7626e-04 (3.43) & 1.0612e-04 (1.07) \\
 1e-04 & 160 &  1.0622e-06 (5.25) & 2.3832e-05  (4.10) & 1.7548e-05 (3.04) & 6.5098e-05 (2.71) & 8.7494e-05 (1.21) \\
 1e-04 & 320 &  2.4145e-07 (4.40) & 5.1861e-06  (4.60) & 1.4779e-05 (1.19) & 2.6513e-05 (2.46) & 5.3256e-05 (1.64) \\

 \hline
 1e-08 & 10 &  2.6200e-03 (0.00) & 7.1442e-03  (0.00) & 3.5721e-03 (0.00) & 9.8738e-03 (0.00) & 3.3238e-09 (0.00) \\
 1e-08 & 20 &  3.0072e-04 (8.71) & 1.6597e-03  (4.30) & 8.2987e-04 (4.30) & 2.3297e-03 (4.24) & 1.4556e-09 (2.28) \\
 1e-08 & 40 &  3.6025e-05 (8.35) & 3.9927e-04  (4.16) & 1.9964e-04 (4.16) & 5.6437e-04 (4.13) & 6.6291e-10 (2.20) \\
 1e-08 & 80 &  4.4018e-06 (8.18) & 9.6931e-05  (4.12) & 4.8466e-05 (4.12) & 1.3746e-04 (4.11) & 2.9937e-10 (2.21) \\
 1e-08 & 160 &  5.3710e-07 (8.20) & 2.2882e-05  (4.24) & 1.1441e-05 (4.24) & 3.2501e-05 (4.23) & 1.2566e-10 (2.38) \\
 1e-08 & 320 &  5.9391e-08 (9.04) & 4.5577e-06  (5.02) & 2.2788e-06 (5.02) & 6.4837e-06 (5.01) & 4.1090e-11 (3.06) \\
\hline
\end{tabular}
\end{center}
  \caption{ $u^*$ is given by equation (\ref{opt-control-num}) and computed on a fine grid of $N=640.$
   The corrresponding
  state are $(c^*,x^*,z^*)$. The state $(c_n,x_n,z_n)_n$ is the discrete solution to (\ref{SRK-1}) using  $u_n$ given by equation (\ref{opt-control-num}) on the corresponding grid $N$. The values of $F$ for $u^*$ and $u_n$ are given by  (\ref{F}) on the respective grids where the adjoint states are obtained through (\ref{ARK-2})
  using IMEX--GSA. The value in the brackets is the residual of the current
  divided by the residual of the previous result.  }
\label{res-GSA}
\end{sidewaystable}

\begin{table}[h]
  \begin{center}
 \begin{tabular}{|c|c|c|c|c|}
  \hline
    $N$ & $\epsilon$ & $\|F_n(u_n)_n\|^2_2$ &  $ \| x^*- (x_n)_n \|_\infty$  &  $ \| u^*- (u_n)_n \|_\infty$   \\
  \hline
 10 & 0e+00 & 9.3435e-16 & 1.1661e-01 (0.00) & 8.1212e-02 (0.00) \\
 20 & 0e+00 & 5.7291e-15 & 5.4444e-02 (2.14) & 3.5548e-02 (2.28) \\
 40 & 0e+00 & 2.1384e-14 & 2.5674e-02 (2.12) & 1.6495e-02 (2.16) \\
 80 & 0e+00 & 4.8134e-13 & 1.1822e-02 (2.17) & 7.9952e-03 (2.06) \\
 160 & 0e+00 & 5.2921e-13 & 5.0243e-03 (2.35) & 3.7065e-03 (2.16) \\
 320 & 0e+00 & 4.9687e-11 & 1.6345e-03 (3.07) & 1.4217e-03 (2.61) \\
 \hline
 10 & 1e-02 & 9.2474e-16 & 1.1649e-01 (0.00) & 8.0766e-02 (0.00) \\
 20 & 1e-02 & 4.6022e-15 & 5.4428e-02 (2.14) & 3.5485e-02 (2.28) \\
 40 & 1e-02 & 1.6138e-13 & 2.5673e-02 (2.12) & 1.6428e-02 (2.16) \\
 80 & 1e-02 & 4.0505e-13 & 1.1818e-02 (2.17) & 7.9385e-03 (2.07) \\
 160 & 1e-02 & 4.9368e-13 & 5.0148e-03 (2.36) & 3.6611e-03 (2.17) \\
 320 & 1e-02 & 2.1669e-11 & 1.6241e-03 (3.09) & 1.4336e-03 (2.55) \\
 \hline
 10 & 1e-01 & 8.7718e-15 & 1.1655e-01 (0.00) & 7.9657e-02 (0.00) \\
 20 & 1e-01 & 6.4816e-15 & 5.4631e-02 (2.13) & 3.5508e-02 (2.24) \\
 40 & 1e-01 & 2.9944e-14 & 2.5813e-02 (2.12) & 1.6218e-02 (2.19) \\
 80 & 1e-01 & 5.3096e-14 & 1.1900e-02 (2.17) & 7.6948e-03 (2.11) \\
 160 & 1e-01 & 1.0236e-13 & 5.0614e-03 (2.35) & 3.4365e-03 (2.24) \\
 320 & 1e-01 & 4.4261e-11 & 1.6702e-03 (3.03) & 1.1434e-03 (3.01) \\
 \hline
 10 & 1e+00 & 3.6848e-15 & 1.0539e-01 (0.00) & 6.7925e-02 (0.00) \\
 20 & 1e+00 & 9.3571e-16 & 4.9678e-02 (2.12) & 3.0924e-02 (2.20) \\
 40 & 1e+00 & 3.8451e-16 & 2.3511e-02 (2.11) & 1.4248e-02 (2.17) \\
 80 & 1e+00 & 2.5252e-13 & 1.0821e-02 (2.17) & 6.3999e-03 (2.23) \\
 160 & 1e+00 & 3.9397e-12 & 4.5746e-03 (2.37) & 2.5857e-03 (2.48) \\
 320 & 1e+00 & 1.9792e-12 & 1.4744e-03 (3.10) & 7.3745e-04 (3.51) \\
 \hline
       \end{tabular}
\end{center}
  \caption{ $u^*$ is the  numerically obtained control (\ref{opt-control-num})  on
  a grid with $N=640$ mesh points after computation of $F(u^*)=0.$ with initial
  value $u(t)=1$ for the Gauss--Seidel iteration. $(u_n)_n$
 is  the numerically obtained control after succesfull computation of $F(u)=0$
 with the same initial value for the Gauss--Seidel iteration.
   The corrresponding
  states are $x^*$ and  $(x_n)_n$, respectively, and they are
 the discrete solution to (\ref{SRK-1}). Note that in order to
  compute  $F((u_n)_n)$  the adjoint states have obtained through (\ref{ARK-2})
  using IMEX SSP2. The value in the brackets is the residual of the current
  divided by the residual of the previous result. }
\label{res-SSP2}
\end{table}

\newpage

\end{document}